\newtheorem{thm}{Theorem}[section]
\newtheorem{lem}[thm]{Lemma}
\newtheorem{cor}[thm]{Corollary}
\newtheorem{prop}[thm]{Proposition}
\theoremstyle{definition}
\newtheorem{defn}[thm]{Definition}
\newtheorem{ex}[thm]{Example}
\newtheorem{rem}[thm]{Remark}
\begin{document}
\setlength{\pdfpagewidth}{8.5in}

\setlength{\pdfpageheight}{11in}
% \renewcommand{\arraystretch}{2}

%%%%% Operators
\newcommand         {\rar}[1]       {\stackrel{#1}{\longrightarrow}}
\def    \onto   {\twoheadrightarrow}
\newcommand         {\har}[1]       {\stackrel{#1}{\hookrightarrow}}
\newcommand{\lx}[2]{{\vphantom{#2}}^{#1}{#2}}
\newcommand{\code}[1]{\overline{C_{#1}}}
\newcommand\rank{\operatorname{rank}}
\newcommand\Fields{\operatorname{Fields}}
\newcommand\Sets{\operatorname{Sets}}
\newcommand\Supp{\operatorname{Supp}}
\newcommand\Spec{\operatorname{Spec}}
\newcommand\trdeg{\operatorname{trdeg}}
\newcommand\Hom{\operatorname{Hom}}
\newcommand\PHS{\operatorname{PHS}}
\newcommand\BS{\operatorname{BS}}
\newcommand\TORS{\operatorname{TORS}}
\newcommand\mdim{\operatorname{mdim}}
\newcommand\ord{\operatorname{ord}}
\newcommand\Id{\operatorname{Id}}
\newcommand\Tr{\operatorname{Tr}}
\newcommand\Prof{\operatorname{Prof}}
\newcommand\Gen{\operatorname{Gen}}
\newcommand\Rep{\operatorname{Rep}}
\newcommand\Sch{\operatorname{Sch}}
\newcommand\Spl{\operatorname{Spl}}
\newcommand\SB{\operatorname{SB}}
\newcommand\A{\operatorname{\mathcal{A}}}
\newcommand\Func{\operatorname{\mathcal{F}}}
\newcommand\FuncG{\operatorname{\mathcal{G}}}
\newcommand\FuncT{\operatorname{\mathcal{T}}}
\newcommand\FuncD{\operatorname{\mathcal{D}}}
\newcommand\F{\operatorname{\mathbb{F}}}
\newcommand\G{\operatorname{\mathbb{G}}}
\newcommand\bbG{\operatorname{\mathbb{G}}}
\newcommand\HH{\operatorname{\mathbb{H}}}
\newcommand\M{\operatorname{\mathbb{M}}}
\newcommand\N{\operatorname{\mathbb{N}}}
\newcommand\Z{\operatorname{\mathbb{Z}}}
\newcommand\bbZ{\operatorname{\mathbb{Z}}}
\newcommand\bbF{\operatorname{\mathbb{F}}}
\newcommand\Char{\operatorname{char}}
\newcommand\bbN{\operatorname{\mathbb{N}}}
\newcommand\C{\operatorname{\mathbb{C}}}
\newcommand\Q{\operatorname{\mathbb{Q}}}
\newcommand\Proj{\operatorname{\mathbb{P}}}
\newcommand\bbP{\operatorname{\mathbb{P}}}
\newcommand\bG{\overline{G}}
\newcommand\bH{\overline{H}}
\newcommand\bGG{\operatorname{\overline{\mathfrak{G}}}}
\newcommand\CatFk{\textrm{\bf{Fields}}_k}
\newcommand\CatS{\textrm{\bf{Sets}}}
\newcommand\ds{\displaystyle}
\newcommand{\mb}[1]{\mathbb{#1}}

\def \F  {\mathbb{F}}
\def    \ind    {\mathrm{ind}}
\def    \tpsi   {\tilde{\psi}}

\newcommand\GL{\operatorname{GL}}
\newcommand\PGL{\operatorname{PGL}}
\newcommand\SL{\operatorname{SL}}
\newcommand\SO{\operatorname{SO}}
\newcommand\Spin{\operatorname{Spin}}
\newcommand\OO{\operatorname{O}}
\newcommand\GO{\operatorname{GO}}
\newcommand\PGO{\operatorname{PGO}}
\newcommand\SP{\operatorname{SP}}
\newcommand\GSP{\operatorname{GSP}}
\newcommand\PGSp{\operatorname{PGSp}}
\newcommand\E{\operatorname{E}}
\newcommand\ed{\operatorname{ed}}
\newcommand\p{\operatorname{p}}
\newcommand\cdim{\operatorname{cdim}}
\newcommand\cd{\operatorname{cdim}}
\newcommand\wt{\operatorname{wt}}
\newcommand\pw{\operatorname{w}}
\newcommand\Mat{\operatorname{M}}
\newcommand\Gal{\operatorname{Gal}}
\newcommand\spann{\operatorname{span}}
\newcommand\End{\operatorname{End}}
\newcommand\Aut{\operatorname{Aut}}
\newcommand\Core{\operatorname{Core}}
\newcommand\Stab{\operatorname{Stab}}
\newcommand\Ker{\operatorname{Ker}}
\newcommand\im{\operatorname{im}}
\newcommand\Code{\operatorname{Code}}
\newcommand\Br{\operatorname{Br}}
\newcommand     \ra     {\rightarrow}
\newcommand     \xra     {\xrightarrow}
\newcommand     \inj    {\hookrightarrow}
\newcommand     \surj    {\twoheadrightarrow}

\title{Essential dimension and error-correcting codes}

\author{Shane Cernele} 
\address{Department of Mathematics, University of British Columbia, 
Vancouver, BC, Canada, V6T 1Z2} 
\email{scernele@math.ubc.ca, reichst@math.ubc.ca, athena@math.ubc.ca} 
\thanks{This paper is based on a portion of the first author's Ph.D.~thesis 
completed at the University of British Columbia.
Both authors gratefully acknowledge financial support
from the University of British Columbia and the
Natural Sciences and Engineering Research Council of Canada.} 

\author{Zinovy Reichstein, {\tiny with an appendix by Athena Nguyen}}

% \address
% {Department of Mathematics \\
% University of British Columbia \\
% Vancouver \\
% CANADA}
% 
% \email
% {reichst {\it at} math.ubc.ca, {\it web page}: www.math.ubc.ca/\~{ }reichst}

\dedicatory{To the memory of Robert Steinberg}

\begin{abstract} 
One of the important open problems in the theory of central simple
algebras is to compute the essential dimension of $\GL_n/\mu_m$, 
i.e., the essential dimension of a generic division algebra of 
degree $n$ and exponent dividing $m$.
In this paper we study the essential dimension of groups  
of the form
\[ G=(\GL_{n_1} \times \dots \times \GL_{n_r})/C \, , \]
where $C$ is a central subgroup of 
$\GL_{n_1} \times \dots \times \GL_{n_r}$. Equivalently, we 
are interested in the essential dimension of a generic $r$-tuple 
$(A_1, \dots, A_r)$ of central simple algebras 
such that $\deg(A_i) = n_i$ and
the Brauer classes of $A_1, \dots, A_r$ satisfy a system 
of homogeneous linear equations in the Brauer group. 
The equations depend on the choice of $C$ via the error-correcting code
$\Code(C)$ which we naturally associate to $C$.
We focus on the case where $n_1, \dots, n_r$ 
are powers of the same prime.
The upper and lower bounds on $\ed(G)$ we obtain
are expressed in terms of coding-theoretic parameters of $\Code(C)$, 
such as its weight distribution.
Surprisingly, for many groups of the above form
the essential dimension becomes easier to estimate 
when $r \geq 3$; in some cases we even compute the exact value. 
The Appendix by Athena Nguyen contains an explicit description of the
Galois cohomology of groups of the form
$(\GL_{n_1} \times \dots \times \GL_{n_r})/C$. This
description and its corollaries are used throughout the paper.
\end{abstract} 

\subjclass[2010]{Primary 20G15, 16K20, 16K50. Secondary 94B05}
   
\maketitle 

\section{Introduction}\label{s:intro}
Let $k$ be a base field.  Unless otherwise specified, we will assume that 
every field appearing in this paper contains $k$ and every homomorphism 
(i.e., inclusion) of fields restricts to the identity map on $k$. 

We begin by recalling the definition of essential dimension of
a covariant functor $\mathcal{F}$ from the category of fields
to the category of sets. Given a field $K$ and an object 
$\alpha \in \mathcal{F}(K)$, we will say that
$\alpha$ {\em descends} to an intermediate field $k \subset K_0 \subset K$
if $\alpha$ lies in the image of the natural map $\mathcal{F}(K_0) \to
\mathcal{F}(K)$. The essential dimension $\ed(\alpha)$ of $\alpha$ is 
defined as the minimal value of $\trdeg_k(K_0)$ such that
$\alpha$ descends to a subfield $k \subset K_0 \subset K$.  Given a prime 
integer $p$, the essential dimension $\ed_p(\alpha)$ of $\alpha$ at $p$
is defined as the minimal value of $\trdeg_k(K_0)$, where the minumum is
taken over all finite field extensions $L/K$ and all intermediate
intermediate fields $k \subset K_0 \subset L$, such that 
$[L:K]$ is prime to $p$ and $\alpha_L$ descends to $K_0$.  

The essential dimension $\ed(\mathcal{F})$ (respectively, the
essential dimension $\ed_p(\mathcal{F})$ at $p$)
of the functor $\mathcal{F}$ is defined
as the maximal value of $\ed(\alpha)$ (respectively of $\ed_p(\alpha)$), 
where the maximum is taken over all feld extensions $K/k$ 
and all objects $\alpha \in \mathcal{F}(K)$.

Informally speaking, $\ed(\alpha)$ is the minimal number of independent
parameters required to define $\alpha$, $\ed(\mathcal{F})$ is the minimal 
number of independent parameters required to define any object 
in $\mathcal{F}$, and $\ed_p(\alpha)$, $\ed_p(\mathcal{F})$ 
are relative versions of these notions at a prime $p$. These relative 
versions are somewhat less intuitive, but they tend to be more accessible 
and more amenable to computation than
$\ed(\alpha)$ and $\ed(\mathcal{F})$. Clearly $\ed(\alpha) \geqslant
\ed_p(\alpha)$ for each $\alpha$, and $\ed(\mathcal{F}) \geqslant 
\ed_p(\mathcal{F})$.  
In most cases of interest $\ed(\alpha)$
is finite for every $\alpha$. On the other hand, $\ed(\mathcal{F})$ 
(and even $\ed_p(\mathcal{F})$) can be infinite.
For an introduction to the theory of essential dimension, we refer 
the reader to the 
surveys~\cite{BF03},~\cite{icm}, \cite{whatis} and~\cite{merkurjev-survey}.

To every algebraic group $G$ one can associate the functor 
\[ \text{$\mathcal{F}_G := H^1(\ast, G)$: $K \mapsto
\{$isomorphism classes of $G$-torsors over $\Spec(K) \}$.} \]
If $G$ is affine, then the essential dimension of this functor is 
known to be finite; it is usually denoted by $\ed(G)$, rather than
$\ed(\mathcal{F}_G)$. For many specific groups $G$, $H^1(K, G)$
is in a natural bijective correspondence with the set of isomorphism 
classes of some algebraic objects defined over $K$. In such cases, 
$\ed(G)$ may be viewed as the minimal number of independent 
parameters required to define any object of this type. 
This number is often related to classical problems in algebra.
 
For example, in the case where $G$ is the projective linear group $\PGL_n$,
the objects in question are central simple algebras.  That is,
\begin{equation} \label{e.gc1}
\text{$H^1(K, \PGL_n) = \{$isomorphism classes of cenral 
simple $K$-algebras of degree $n\}$}.  
\end{equation}
The problem of computing $\ed(\PGL_n)$ is one of the important open 
problems in the theory of central simple algebras; 
see~\cite[Section 6]{abgv}. 
This problem was first posed by C.~Procesi, 
who showed (using different terminology) that 
\begin{equation} \label{e.procesi}
\ed(\PGL_n) \leqslant n^2 \, ;
\end{equation}
see~\cite[Theorem 2.1]{procesi}. Stronger (but still quadratic)
upper bounds can be found in~\cite[Theorem 1.1]{lrrs} 
and~\cite[Theorem 1.6]{lemire}.
  
A more general but closely related problem is computing $\ed(\GL_n/\mu_m)$, 
where $m$ and $n$ are positive integers and $m$ divides $n$. Note that 
\begin{equation} \label{e.gc2}
\begin{array}{rr} 
H^1(K, \GL_n/\mu_m) = & \text{$\{$isomorphism classes of central simple 
$K$-algebras} \\ 
 & \text{of degree $n$ and exponent dividing $m \}$}.
\end{array} 
\end{equation}
In particular, $\ed(\PGL_n) = \ed(\GL_n/\mu_n)$. 
The problem of computing $\ed(\GL_n/\mu_m)$ partially 
reduces to the case where $m = p^s$ and $n = p^a$ are powers 
of the same prime $p$ and $1 \leqslant s \leqslant a$.

{\em From now on we will always assume that} $\Char(k) \neq p$.
The inequalities 
\begin{eqnarray} \label{e.bm}
\quad
\quad
p^{2a-2} + p^{a-s} \geqslant
\ed_p(\GL_{p^a}/{\mu_{p^s}}) & \geqslant
& \begin{cases}(a-1)2^{a-1} & \textrm{if $p=2$ and $s=1$,}
\\(a-1)p^a+p^{a-s} & \textrm{otherwise,} \end{cases}
\end{eqnarray}
proved in~\cite{BM12} represent a striking improvement on the best
previously known bounds. (Here $a \geqslant 2$.) Yet the gap between 
the lower and upper bounds in~\eqref{e.bm}
remains wide. The gap between the best known upper and lower bounds
becomes even wider when $\ed_p(\GL_{p^a}/\mu_{p^s})$
is replaced by $\ed(\GL_{p^a}/\mu_{p^s})$.

These gaps in our understanding of $\ed(\GL_n/\mu_m)$
will not deter us from considering the vastly more general problem 
of computing the essential dimension of groups of the form 
\begin{equation} \label{e.G}
G := (\GL_{n_1} \times \dots \times \GL_{n_r})/C,  
\end{equation}
in the present paper. Here $n_1, \dots, n_r \geqslant 2$ are integers, 
and $C \subset \bbG_m^r$ is a central subgroup of 
$\GL_{n_1} \times \dots \times \GL_{n_r}$.

As usual, we will identify elements $(m_1, \dots, m_r)$ of 
$\bbZ^r$
with characters $x \colon \bbG_m^r \to \bbG_m$, where 
$x \colon (\tau_1, \dots, \tau_r) \to
\tau_1^{m_1} \dots \tau_r^{m_r}$. The subgroup $C \subset \bbG_m^r$
is completely determined by the $\bbZ$-module
\begin{equation} \label{e.integer-code}
X(\bbG_m^r/C) = \{ (m_1, \dots, m_r) \in \bbZ^r \;  
| \; \tau_1^{m_1} \dots \tau_r^{m_r} = 1  \; \; 
\forall (\tau_1, \dots, \tau_r) \in C \} \, , 
\end{equation}
consisting of characters of $\bbG_m^r$ which vanish on $C$.
The Galois cohomology of $G$ is explicitly described in the appendix:
by Theorem~\ref{thm.appendix},
$H^1(K, G)$ is naturally isomorphic to the set of isomorphism classes
of $r$-tuples $(A_1, \dots, A_r)$ of central simple $K$-algebras 
such that 
\[ \text{$\deg(A_i) = n_i$} \quad \text{and} \quad 
\text{$A_1^{\otimes m_1} \otimes \dots \otimes A_r^{\otimes m_r}$ 
is split over $K$} \]
% $\; \forall \, 
for every $(m_1, \dots, m_r) \in X(\bbG_m^r/C)$. 
(Note that in the special case where $r =1$, we 
recover~\eqref{e.gc1} and~\eqref{e.gc2}.)  It follows from this description
that the essential dimension of $G$ does not change if $C$ 
is replaced by $C \cap \mu$, where
\begin{equation} \label{e.mu}
\mu := \mu_{n_1} \times \dots \times \mu_{n_r}; 
\end{equation}
see Corollary~\ref{cor1.appendix}.  
Thus we will assume throughout that $C \subset \mu$. Unless otherwise
specified, we will also assume that $n_1 = p^{a_1}, \dots, n_r = p^{a_r}$
are powers of the same prime $p$. Here $a_1, \dots, a_r \geqslant 1$ are
integers.  Under these assumptions,
instead of $X(\bbG_m^r/C) \subset \bbZ^r$ we will 
consider the subgroup of 
$X(\mu) = (\bbZ/p^{a_1} \bbZ) \times \dots \times (\bbZ/p^{a_r} \bbZ)$
given by 
\begin{equation} \label{e.C}
\Code(C) := X(\mu/C) = \{ (m_1, \dots, m_r) \in X(\mu) \;  
| \; \tau_1^{m_1} \dots \tau_r^{m_r} = 1  \; \; 
\forall (\tau_1, \dots, \tau_r) \in C \} \, .  
\end{equation}
In other words, $\Code(C)$ consists of those characters
of $\mu$ which vanish on $C$. The symbol ``$\Code$" indicates that 
we will view this group as 
an error-correcting code.  In particular, we will 
define the Hamming weight $\pw(y)$ of 
\[ y = (m_1, \dots, m_r) \in 
(\bbZ/ p^{a_1} \bbZ) \times \dots \times (\bbZ/ p^{a_r} \bbZ) \]
as follows. Write $m_i :=  u_i p^{e_i}$ with $u_i \in (\Z/\p^{a_i}\Z)^*$ 
and $0 \leq e_i \leq a_i$. 
Then 
\[ \pw(y) := \sum_{i=1}^r(a_i-e_i) \, . \]
Our main results relate $\ed(G)$ to coding-theoretic invariants of
$\Code(C)$, such as its weight distribution; cf. 
also Corollary~\ref{cor2.appendix}.  For an introduction 
to error-correcting coding theory, see~\cite{macwilliams-sloane}. 

At this point we should warn the reader that our notions of 
error-correcting code and Hamming weight are somewhat unusual.
In coding-theoretic literature (linear) codes are usually defined as
linear subspaces of $\mathbb F_q^n$, where $\mathbb F_q$ is  
the field of $q$ elements. In this paper by a code we will mean 
an additive subgroup of
$(\bbZ/ p^{a_1} \bbZ) \times \dots \times (\bbZ/ p^{a_r} \bbZ)$.
Nevertheless, in an important special case, where 
$a_1 = \dots = a_r = 1$, our codes are linear codes of 
length $r$ over $\mathbb F_p$ in the usual sense of error-correcting 
coding theory, and our definition of the Hamming weight coincides 
with the usual definition.

\begin{thm}\label{thm.main1} 
Let $p$ be a prime,
$G := (\GL_{p^{a_1}} \times \dots \times \GL_{p^{a_r}})/C$, where
$C \subset  \mu_{p^{a_1}} \times \dots \times \mu_{p^{a_r}}$ is 
a central subgroup, and
$y_1, \dots, y_t$ be a minimal basis for $\Code(C)$; 
see Definition~{\rm \ref{def.minimal-basis}}. Then

\smallskip
(a) $\ed_p(G) \geqslant 
\left(\sum_{i=1}^tp^{\pw(y_i)}\right) - p^{2a_1} - \dots - p^{2 a_r} + r - t$,

\smallskip
(b) $\ed(G) \leqslant 
\left(\sum_{i=1}^tp^{\pw(y_i)}\right) - t +  \ed(\overline{G}) \;$
and 
$\; \ed_p(G) \leqslant 
\left(\sum_{i=1}^tp^{\pw(y_i)}\right) - t +  \ed_p(\bG)$,

\noindent
where $\overline{G} := \PGL_{p^{a_1}} \times \dots \times \PGL_{p^{a_r}}$.
\end{thm}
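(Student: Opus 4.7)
The two inequalities will be proved by essentially disjoint techniques.

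\emph{Upper bound (b).} The key structural fact I would exploit is the exact sequence
\[ 1 \to T \to G \to \bG \to 1, \]
where $T = \bbG_m^r / C$ is a central split torus. By Hilbert~90, $H^1(K,T) = 0$ for every field $K/k$, and since $T$ is central the same holds for every inner twist, so the induced map $H^1(K,G) \to H^1(K,\bG)$ is injective. It suffices to bound $\ed(\alpha)$ when $\alpha$ is a versal $G$-torsor, because any other torsor specializes from it. Writing $\beta \in H^1(K,\bG)$ for the image $(A_1, \dots, A_r)$, I would descend $\beta$ as a $\bG$-torsor to a subfield $K_0 \subset K$ with $\trdeg_k(K_0) \leq \ed(\bG)$ (respectively $\ed_p(\bG)$), obtaining $(A_1^0, \dots, A_r^0)$. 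Versality forces each $A_j^0$ to be a division algebra of degree \emph{and} exponent $p^{a_j}$, so for $y_i = (m_{i,1}, \dots, m_{i,r}) \in \Code(C)$ with $m_{i,j} = u_{i,j} p^{e_{i,j}}$, the standard structure theory of $p$-primary division algebras gives $\ind([A_j^0]^{\otimes m_{i,j}}) = p^{a_j - e_{i,j}}$; hence the index of the tensor product $D_i^0 := (A_1^0)^{\otimes m_{i,1}} \otimes \cdots \otimes (A_r^0)^{\otimes m_{i,r}}$ divides $\prod_j p^{a_j - e_{i,j}} = p^{\pw(y_i)}$.

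Let $\Delta_i$ denote the underlying division algebra of $D_i^0$. Then the function field $k(\SB(\Delta_i))$ is a splitting field for $D_i^0$ of transcendence degree $\deg(\Delta_i) - 1 \leq p^{\pw(y_i)} - 1$ over $K_0$. Taking $K_1$ to be the function field of $\prod_i \SB(\Delta_i)$ over $K_0$ yields $\trdeg_k(K_1) \leq \ed(\bG) + \sum_i (p^{\pw(y_i)} - 1) = \ed(\bG) + \sum_i p^{\pw(y_i)} - t$. Over $K_1$ every $D_i^0$ is split, so $\beta_0|_{K_1}$ satisfies all the code relations and therefore lies in the image of $H^1(K_1, G) \to H^1(K_1, \bG)$; call its preimage $\alpha_1$. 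Injectivity of the $G \to \bG$ cohomology map then forces $\alpha_1|_K = \alpha$, so $\alpha$ descends to $K_1$, establishing the upper bound for both $\ed$ and $\ed_p$ simultaneously (the $\ed_p$ case permits the initial prime-to-$p$ extension allowed in the definition of $\ed_p(\bG)$).

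\emph{Lower bound (a).} This is where I expect the main obstacle. Rewriting the target as $\bigl(\sum p^{\pw(y_i)} + r - t\bigr) - \dim G$ suggests that it is the ``$\dim V_{\min} - \dim G$'' prediction of a Karpenko--Merkurjev-style minimal-faithful-representation formula. My plan is to locate a central diagonalizable $p$-subgroup $Z \subset G$ assembled from the dual of $\Code(C)$ (together with enough of a $p$-Sylow of $\bG$ to see the weight data), compute the minimal dimension of a faithful representation of $Z$ directly from the weights coming from the $y_i$'s, and then deduce the bound for $\ed_p(G)$ via a descent/gerbe inequality of the form $\ed_p(G) + \dim G \geq \ed_p(Z) + \dim Z$ or, alternatively, via the canonical dimension of the product of the generic Severi--Brauer varieties $\SB(\Delta_i)$, which equals $\sum (p^{\pw(y_i)} - 1)$ by Karpenko's incompressibility theorem. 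The expected mechanism is that $p^{\pw(y_i)}$ arises as the minimal dimension of an irreducible $G$-subrepresentation realizing the central character $y_i$, and the additional $r - t$ accounts for the weight-lattice discrepancy between a minimal basis $y_1, \dots, y_t$ of $\Code(C)$ and the full character lattice $X(T)$ of rank $r$. The hard part will be extracting the exact combinatorial match between the coding-theoretic weight $\pw(y_i)$ and the representation-theoretic minimum, and in particular justifying that a minimal basis $y_1, \dots, y_t$ (rather than some other generating set) produces the sharp count.
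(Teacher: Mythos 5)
Your overall framework matches the paper's: you work with the central extension $1 \to T \to G \to \bG \to 1$ where $T = \bbG_m^r/C$, and you reduce the problem to the canonical dimension of a product of Brauer--Severi varieties coming from $\partial(\beta)$. That is exactly Proposition~\ref{prop.central-extension} plus the computation of $\ind(G,T)$ in Proposition~\ref{prop.index}. However, there are two real gaps. First, in the upper bound your field $K_1 = K_0\bigl(\prod_i \SB(\Delta_i)\bigr)$ is \emph{not} a subfield of $K$, so the phrase ``$\alpha$ descends to $K_1$'' is ill-posed: what you actually need is that $\prod_i \SB(\Delta_i)$ has a $K$-rational point (true, since each $D_i^0$ splits over $K$) and then the canonical-dimension machinery to produce a detecting subfield $K_0' \subset K$ with $\trdeg_{K_0}(K_0') \leqslant \cd\bigl(\prod_i \SB(\Delta_i)\bigr) \leqslant \dim\bigl(\prod_i \SB(\Delta_i)\bigr)$ over which the gerbe is neutral; the paper sidesteps this by invoking $\ed(G) \leqslant \ed(\bG) + \max_{K,E}\ed([E/G])$ from L\"otscher and the identity $\ed([E/G]) = \cd(P)$ for a gerbe banded by the special torus $T$. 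Also, the detour through ``versality forces each $A_j^0$ to be a division algebra'' is unnecessary and not quite right: for the upper bound, $\ind(D_i^0) \mid p^{\pw(y_i)}$ is an elementary consequence of $\ind(A_j^{\otimes m}) \mid p^{a_j - e_j}$ and $\ind(B_1 \otimes B_2) \mid \ind(B_1)\ind(B_2)$, with no versality needed.

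Second, and more seriously, the lower bound is not actually proved. You correctly identify the two standard mechanisms (Karpenko--Merkurjev minimal faithful representation, or incompressibility of $p$-primary Brauer--Severi products), but you leave the combinatorial core as an acknowledged gap. That core is Proposition~\ref{prop.index}: one must (i) show $\ind^x(G,T) = p^{\pw(\pi(x))}$ for every character $x$ of $T$, with the lower bound on the index obtained by feeding in explicit generic symbol algebras and a valuation-theoretic division argument, and (ii) show that among all $\bbZ$-bases $x_1, \dots, x_r$ of $X(T)$, the quantity $\sum_i p^{\pw(\pi(x_i))}$ is minimized precisely by lifting a \emph{minimal} basis $y_1, \dots, y_t$ of $\Code(C)$ (Lemma~\ref{lem.elementary} provides such a lift) and padding with $r - t$ characters mapping to $0 \in \Code(C)$, whence the $r - t$ term. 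Step (ii) is where the minimal-basis theory of Section~\ref{sect.minimal-basis} (Proposition~\ref{prop.minimal-basis}: the profile of a minimal basis is $\preceq$ the profile of any basis obtained from a generating set) is indispensable; without it, nothing rules out a non-minimal basis of $X(T)$ giving a smaller sum, and the claimed formula would only be an upper bound on $\ind(G,T)$, which is useless for the lower bound on $\ed_p(G)$.
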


Although the upper and lower bounds 
of Theorem~\ref{thm.main1} never meet, for many 
central subgroups $C \subset \mu \subset G$, the term 
$\ds \sum_{i=1}^t p^{\pw(y_i)}$ is much larger than 
any of the other terms appearing in the above inequalities and
may be viewed as giving the asymptotic value of $\ed(G)$.
(In particular, note that in view of~\eqref{e.procesi},
\begin{equation} \label{e.product} \ed_p(\overline{G}) \leqslant 
\ed(\overline{G}) \leqslant \ed(\PGL_{p^{a_1}}) + \dots + \ed(\PGL_{p^{a_r}})
\leqslant p^{2a_1} + \dots + p^{2a_r}.) \end{equation} 
Under additional assumptions on $C$, we 
will determine $\ed(G)$ exactly; see Theorem~\ref{thm.main2}.

The fact that we can determine $\ed(G)$ for many choices of $C$,
either asymptotically or exactly, was rather surprising to us,
given the wide gap between the best known upper and lower 
bounds on $\ed(G)$ in the simplest case, where $r= 1$; 
% $G =\GL_{p^a}/\mu_{p^s}$ and $1 \leqslant s \leqslant a$;
see~\eqref{e.bm}.
Our informal explanation of this surprising phenomenon is 
as follows. If $\Code(C)$ can be generated by vectors 
$y_1, \dots, y_t$ of small weight, then 
$\ds \sum_{i=1}^tp^{\pw(y_i)}$ no longer dominates
the other terms.  In particular, this always happens 
if $r \leqslant 2$.  In such cases the value of $\ed(G)$ 
is controlled by the more subtle ``lower order effects", 
which are poorly understood. 
 
To state our next result, we will need the following terminology.
Suppose $2 \leqslant n_1 \leqslant \dots \leqslant n_t$ and
$z = (z_1, \dots, z_r) \in 
(\bbZ/ n_1 \bbZ) \times \dots \times (\bbZ/ n_r \bbZ)$, where
$z_{j_1}, \dots, z_{j_s} \neq 0$ for some 
$1 \leqslant j_1 < \dots < j_s \leqslant r$
and $z_j = 0$ for any $j \not \in \{j_1, \dots, j_r \}$.  
We will say that $z$ is {\em balanced} if 

\smallskip
(i) $n_{j_s} \leqslant \dfrac{1}{2}
n_{j_1} n_{j_2} \dots n_{j_{s-1}}$ and

\smallskip
(ii) $(n_{j_1}, \dots, n_{j_s}) \neq (2, 2, 2, 2), (3, 3, 3)$ 
or $(2, n, n)$ for any $n \geqslant 2$.

\smallskip
\noindent
Note that condition (i) can only hold if $s \geqslant 3$. In particular,
$(\bbZ/ n_1 \bbZ) \times \dots \times (\bbZ/ n_r \bbZ)$ 
has no balanced elements if $r \leqslant 2$.
In the sequel we will usually assume that $n_1, \dots, n_r$ are 
powers of the same prime $p$. In this situation condition
(ii) is vacuous, unless $p = 2$ or $3$.

\begin{thm} \label{thm.main2} Let $p$ be a prime,
$G := (\GL_{p^{a_1}} \times \dots \times \GL_{p^{a_r}})/C$, where
$a_r \geqslant \dots \geqslant a_1 \geqslant 1$ are integers,
and $C$ is a subgroup of $\mu$, as in~\eqref{e.mu}.  
Assume that the base field $k$ is of characteristic zero and
$\Code(C)$ has a minimal basis $y_i = (y_{i1}, \dots, y_{ir})$, 
$i = 1, \dots, t$ satisfying the following conditions: 

\smallskip
(a) $y_{ij} = -1$, $0$ or $1$ in $\Z/p^{a_j}\Z$, 
for every $i = 1, \dots, t$ and $j = 1, \dots, r$.

\smallskip
(b) For every $j = 1, \dots, r$ there exists an $i \in \{ 1, \dots, t \}$
such that $y_i$ is balanced and $y_{ij} \neq 0$.
% and $y_i$ is balanced.

\smallskip
\noindent
Then 
$\ds \ed(G) = \ed_p(G) = 
\left(\sum_{i=1}^t p^{\pw(y_i)}\right) - p^{2a_1} - \dots - p^{2a_r} + r - t$.
\end{thm}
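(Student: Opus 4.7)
The lower bound
\[ \ed(G) \geq \ed_p(G) \geq \sum_{i=1}^t p^{\pw(y_i)} - \sum_{j=1}^r p^{2a_j} + r - t \]
is immediate from Theorem~\ref{thm.main1}(a), so the entire task is to prove the matching upper bound. One cannot simply invoke Theorem~\ref{thm.main1}(b), since it carries the extra term $\ed(\bG)$ that we cannot afford. Instead, the plan is to exhibit a generically free representation $V$ of $G$ of dimension $\sum_{i=1}^t p^{\pw(y_i)} + (r-t)$ and then conclude via the standard inequality $\ed(G) \leq \dim V - \dim G$, combined with $\dim G = \sum_j p^{2a_j}$.

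The candidate $V$ is built directly from the minimal basis. Because condition~(a) forces each entry $y_{ij}$ to lie in $\{-1, 0, 1\} \subset \bbZ/p^{a_j}\bbZ$, I would form, for each $i$, the tensor product representation
\[ W_{y_i} := \bigotimes_{j : y_{ij} = 1} V_j \;\otimes\; \bigotimes_{j : y_{ij} = -1} V_j^{*}, \]
where $V_j = k^{p^{a_j}}$ is the standard representation of $\GL_{p^{a_j}}$. Since $y_i \in \Code(C)$, the central subgroup $C$ acts trivially on $W_{y_i}$, so $W_{y_i}$ descends to a representation of $G$ of dimension $p^{\pw(y_i)}$. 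I would then set $V := \bigoplus_{i=1}^t W_{y_i} \oplus U$, where $U$ is an $(r-t)$-dimensional representation of $G$ whose weights span a complement to $\Code(C)$ inside $X(\mu)$, chosen to make the residual central torus of $G$ act faithfully on $U$. The total dimension is exactly $\sum p^{\pw(y_i)} + (r-t)$, as required.

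The main obstacle is establishing that $V$ is generically free as a $G$-representation, and this is where condition~(b) plays its decisive role. For each coordinate $j$, the existence of a balanced $y_i$ with $y_{ij} \neq 0$ is precisely the classical hypothesis needed to ensure that the factor $\PGL_{p^{a_j}}$ of $\bG$ acts with trivial generic stabilizer on $W_{y_i}$. Both parts of the definition of \emph{balanced}---the size inequality (i), and the exclusion (ii) of the exceptional formats $(2,2,2,2)$, $(3,3,3)$, and $(2,n,n)$---correspond exactly to the formats in which a generic tensor in $\bigotimes V_{j_\ell}$ fails to have trivial stabilizer modulo scalars under $\prod \PGL_{n_{j_\ell}}$, so they must be excluded. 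Assembling these local generic-stabilizer calculations for each $j$ into a single global statement about the $G$-action on $V$, and verifying that the summand $U$ also kills the residual torus directions of $G$, is the technical heart of the argument. Once generic freeness is established, the resulting upper bound matches the lower bound, forcing equality $\ed(G) = \ed_p(G)$ and completing the proof.
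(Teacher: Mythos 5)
Your approach shares the core idea with the paper's: lift the minimal basis entries (all in $\{-1,0,1\}$ by condition (a)) to integer vectors $x_i$, form the tensor representations $V_{x_i}$ of dimension $p^{\pw(y_i)}$, and use condition (b) together with Popov's theorem (Lemma~\ref{lem.ampopov}) to show the stabilizer in general position is central. The difference is in how the residual $(r-t)$-dimensional central torus is handled, and that is where your proposal has a real gap.

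You want to adjoin an $(r-t)$-dimensional summand $U$ ``whose weights span a complement to $\Code(C)$ inside $X(\mu)$, chosen to make the residual central torus of $G$ act faithfully.'' This cannot be done as stated: every character of $G = \Gamma/C$ is of the form $\prod_j \det_j^{m_j}$, whose restriction to $\bbG_m^r = Z(\Gamma)$ lies in $p^{a_1}\bbZ \times \dots \times p^{a_r}\bbZ$, i.e.\ vanishes identically on $\mu$. So the weights of $U$ on $\mu$ are all trivial and cannot ``span a complement to $\Code(C)$ in $X(\mu)$.'' If one instead interprets the weights as characters of $Z(G) = \bbG_m^r/C$, a further difficulty appears: you need the residual torus $\widetilde C/C$ (where $\widetilde C := \bigcap_i \Ker(x_i)$) to act with \emph{trivial} kernel on $U$, not merely finite kernel, and when $\widetilde C$ is disconnected its component group may contain prime-to-$p$ torsion that lies outside $\mu$, hence outside $C$. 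Since characters of $G$ factor through $\bbG_m^r/\mu$, one may need more than $r-t$ one-dimensional summands to kill this torsion, which would ruin the dimension count. Your proposal neither notices this issue nor rules it out.

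The paper's route sidesteps all of this: instead of enlarging the representation, it replaces $G$ by $\widetilde G := \Gamma/\widetilde C$. Corollary~\ref{cor1.appendix} shows that $\ed(G) = \ed(\widetilde G)$ because $\widetilde C \cap \mu = C$ (so $H^1(-,G) \cong H^1(-,\widetilde G)$). The representation $V = V_{x_1}\oplus \dots \oplus V_{x_t}$ (with \emph{no} extra summand $U$) already has kernel exactly $\widetilde C$ (Lemma~\ref{lem.tensors}(b)), and thus descends to a generically free representation of $\widetilde G$. Since $\dim \widetilde G = \sum_j p^{2a_j} - (r-t)$, the inequality $\ed(\widetilde G) \le \dim V - \dim\widetilde G$ yields exactly the desired upper bound. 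In short: rather than padding the representation to absorb the residual torus, the paper absorbs it into the group, using the flexibility afforded by Corollary~\ref{cor1.appendix}. You would do well to adopt this mechanism, as it renders the delicate questions about $\widetilde C$'s component group irrelevant.
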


% In coding-theoretic literature, the $t \times r$-matrix  
% $\begin{pmatrix} 
%  y_{11} & y_{12} & \hdots & y_{1r} \\ 
%  \hdotsfor{4} \\
%  y_{t1} & y_{t2} & \hdots & y_{tr} 
%  \end{pmatrix}$ 
% is called a {\em generator matrix} for $\Code(C)$.  
% We also note that in the setting of the theorem, where
% $n_1, \dots, n_r$ are powers of $p$,
% condition (ii) defining a balanced vector
% is vacuous if $p \neq 2, 3$ or if $a_1 \geqslant 2$.

Specializing Theorem~\ref{thm.main2}
to the case where $\Code(C)$ is generated by the single
element $(1, \dots, 1)$, we obtain the following.

\begin{thm} \label{thm.main3}
Let $a_r \geqslant a_{r-1} \geqslant \dots \geqslant a_1 \geqslant 1$ 
be integers and $\Func \colon \CatFk \to \CatS$ be 
the covariant functor given by 
\[ \ds \Func(K) := \left\{
\begin{array}{l} 
\text{isomorphism classes of $r$-tuples $(A_1, \dots, A_r)$ of 
central simple $K$-algebras } \\
\text{such that $\deg(A_i)=p^{a_i}\; \forall$ $i = 1, \dots, r$, 
and $A_1 \otimes \ldots \otimes A_r$ is split over $K$.}\\
\end{array}
\right\} \]

\smallskip
(a) If $\ds a_r \geqslant a_1 + \dots + a_{r-1}$, 
then $\ed(\Func) = \ed(\PGL_{p^{a_1}} \times
\dots \times \PGL_{p^{a_{r-1}}})$ and 
\[ \ed_p(\Func) = \ed_p(\PGL_{p^{a_1}} \times
\dots \times \PGL_{p^{a_{r-1}}}) \, . \]
In particular, $\ed(\Func) 
\leqslant p^{2a_1} + \dots + p^{2a_{r-1}}$.

\smallskip
(b) Assume that $\Char(k) = 0$, $\ds a_r < a_1 + \dots + a_{r-1}$, 
and $(p^{a_1}, \dots, p^{a_r})$ is not of the form 
$(2,2,2,2)$, $(3,3,3)$ or $(2, 2^a, 2^a)$, for any $a \geqslant 1$.
Then 
\begin{equation} \label{e.main3}
\ds \ed(\Func) = \ed_p(\Func) = 
p^{a_1 + \dots + a_r} - \sum_{i=1}^rp^{2a_i} +r-1 \, .
\end{equation}

(c) If $(p^{a_1}, \dots, p^{a_r}) = (2, 2, 2)$, then
$ \ds \ed(\Func) = \ed_2(\Func) = 3$. 
\end{thm}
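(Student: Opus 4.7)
For all three parts, observe that $\Func = H^1(\ast, G)$ for $G = (\GL_{p^{a_1}} \times \cdots \times \GL_{p^{a_r}})/C$ with $\Code(C) = \langle (1,\dots,1) \rangle$, so that $t = 1$, $y_1 = (1, \dots, 1)$, and $\pw(y_1) = a_1 + \cdots + a_r$. For part (a), the hypothesis $a_r \geq a_1 + \cdots + a_{r-1}$ suggests that $A_r$ is essentially determined by $(A_1, \dots, A_{r-1})$. I would prove that the forgetful map
\[ \Func(K) \to \prod_{i=1}^{r-1} H^1(K, \PGL_{p^{a_i}}), \qquad (A_1, \dots, A_r) \mapsto (A_1, \dots, A_{r-1}), \]
is a functorial bijection: surjective because the Brauer class $-([A_1] + \cdots + [A_{r-1}])$ has index dividing $p^{a_1 + \cdots + a_{r-1}} \mid p^{a_r}$ and so is realized by some central simple $K$-algebra of degree $p^{a_r}$; injective because two central simple algebras of the same degree and the same Brauer class are isomorphic. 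This identification gives $\ed(\Func) = \ed(\PGL_{p^{a_1}} \times \cdots \times \PGL_{p^{a_{r-1}}})$ and likewise for $\ed_p$, after which Procesi's bound $\ed(\PGL_n) \leq n^2$ combined with subadditivity of $\ed$ under products yields the inequality $\ed(\Func) \leq p^{2a_1} + \cdots + p^{2a_{r-1}}$.

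For part (b), I would apply Theorem~\ref{thm.main2} with the single minimal basis vector $y_1 = (1, \dots, 1)$. Condition (a) of that theorem is immediate. For condition (b), I check that $y_1$ is balanced: its support is all of $\{1,\dots,r\}$, so condition (i) becomes $p^{a_r} \leq \tfrac{1}{2} p^{a_1 + \cdots + a_{r-1}}$, which for $a_i \in \bbZ_{\geq 1}$ and prime $p \geq 2$ is equivalent to $a_r < a_1 + \cdots + a_{r-1}$; condition (ii) excludes precisely the tuples $(2,2,2,2)$, $(3,3,3)$, and $(2, 2^a, 2^a)$ (noting that the components are powers of $p$, forcing $n = 2^a$ in the $(2,n,n)$ case). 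Theorem~\ref{thm.main2} then yields~\eqref{e.main3}.

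For part (c), $\Func(K)$ classifies triples $(A_1, A_2, A_3)$ of quaternion $K$-algebras with $A_1 \otimes A_2 \otimes A_3$ split. For the upper bound $\ed(\Func) \leq 3$: since $A_1 \otimes A_2$ is Brauer-equivalent to $A_3^{\mathrm{op}}$, its index is at most $2$, so by Albert's theorem on common quadratic splitting fields, there exist $a, b, c \in K^*$ with $A_1 \cong (a, b)_K$ and $A_2 \cong (a, c)_K$; the degree and Brauer-class constraints then force $A_3 \cong (a, bc)_K$, and the triple descends to $k(a, b, c)$. For the lower bound $\ed_2(\Func) \geq 3$: passing to $\bar k$ (which can only decrease $\ed_2$), I would consider the triple $((a,b), (a,c), (a, bc))$ over $K = \bar k(a, b, c)$ with $a, b, c$ algebraically independent and use a nonzero degree-$3$ cohomological invariant valued in $H^3(K, \bbZ/2\bbZ)$—essentially the Arason invariant $e_3$ of the $3$-fold Pfister form $\langle\langle a, b, c \rangle\rangle$—which must descend along with the triple; but $H^3(F, \bbZ/2\bbZ) = 0$ for any $F$ of transcendence degree $\leq 2$ over $\bar k$, forcing transcendence degree at least $3$. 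The main obstacle is showing that this degree-$3$ invariant is well-defined on $\Func$: different Albert decompositions of the pair $(A_1, A_2)$ produce a priori different symbols $(a) \cup (b) \cup (c) \in H^3(K, \bbZ/2\bbZ)$, and one must verify they agree (or descend to a canonical class in a suitable quotient), which comes down to a symbol computation in $H^3$ governed by the Merkurjev--Suslin theorem.
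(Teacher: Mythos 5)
Your proofs of parts (a) and (b), and of the upper bound $\ed(\Func) \leqslant 3$ in part (c), track the paper's argument essentially verbatim: the bijection with $\prod_{i<r} H^1(-,\PGL_{p^{a_i}})$ under the hypothesis $a_r \geqslant a_1 + \dots + a_{r-1}$, the application of Theorem~\ref{thm.main2} to the minimal basis $\{(1,\dots,1)\}$ of $\Code(C)$ (your verification that condition (i) of ``balanced'' is exactly $a_r < a_1 + \dots + a_{r-1}$ is correct), and the use of Albert's linkage theorem to descend to $k(a,b,c)$. These are all fine.

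The gap you flag in the lower bound $\ed_2(\Func) \geqslant 3$ is a real one, and the obstacle you identify is not merely technical. The class $(a)\cup(b)\cup(c) \in H^3(K,\bbZ/2)$ genuinely depends on the chosen Albert decomposition and there is no reason for it to be an invariant of the isomorphism class of $(A_1,A_2,A_3)$. Concretely, the natural well-defined class attached to the form $\alpha \cong \langle -b, c, ab, -ac\rangle$ is the Stiefel--Whitney class $w_4(\alpha) = (a)\cup(b)\cup(c)\cup(-1)$, which sits in degree $4$, not $3$; and after passing to $\bar k$ (where $(-1)=0$) this vanishes identically, while $w_3$ is already zero on all $4$-dimensional forms of trivial discriminant. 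So one should not expect a nonzero degree-$3$ invariant on $H^1(-,\SO_4)$, and it is far from clear that $\Func$ carries one either. Without a well-defined invariant there is no mechanism forcing the class to descend with the triple, and the transcendence-degree argument does not get off the ground. The paper sidesteps all of this: it shows that $(A_1,A_2,A_3)\mapsto \alpha$ gives a \emph{surjection} of functors $\Func \to H^1(-,\SO_4)$ (here one needs surjectivity, not an invariant), which yields $\ed_2(\Func) \geqslant \ed_2(\SO_4)$, and then quotes $\ed_2(\SO_4)=3$ from \cite{RY00}. Note that $\ed_2(\SO_4)=3$ is itself proved there by the resolution/compression machinery rather than by a cohomological invariant, which is consistent with there being no degree-$3$ invariant to exploit. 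If you want to retain a cohomological-invariant flavour, the right move is to push forward to $\SO_4$ first and then cite the known lower bound, rather than to try to build an $H^3$ class directly from an Albert decomposition.
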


Here part (c) treats the smallest of the exceptional cases 
in part (b). Note that in this case $p = 2$, $r = 3$ and $a_1 = a_2 = a_3 = 1$. 
Thus $p^{a_1 + \dots + a_r} - \sum_{i=1}^r p^{2a_i} +r-1 = -2$, and
formula~\eqref{e.main3} fails.  The values of $\ed(\Func)$ and 
$\ed_p(\Func)$ in the other exceptional cases, where 
$(p^{a_1}, \dots, p^{a_r}) = (2,2,2,2)$, $(3,3,3)$, or $(2, 2^a, 2^a)$ 
for some $a \geqslant 2$, remain open.

The results of this paper naturally lead to combinatorial questions,
which we believe to be of intependent interest but will not 
address here.  For each code (i.e. subgroup)
$X \subset (\bbZ/ p^{a_1} \bbZ) \times \dots \times (\bbZ/ p^{a_r} \bbZ)$ 
of rank $t$, let $(w_1, \dots, w_t)$ be the minimal profile 
of $X$ with respect to the Hamming weight function, in the sense 
of Proposition~\ref{prop.minimal-basis}. 
That is, $w_i = \pw(y_i)$, where $y_1, \dots, y_t$ is a minimal basis of $X$.  
Fixing $p$, $a_1 \leqslant \dots \leqslant a_r$ 
and $t$, and letting $X$ range over
all possible codes with these parameters,

\begin{itemize}
\item
\smallskip
What is the lexicographically largest profile $(w_1, \dots, w_t)$?

\item
\smallskip
What is the maximal value of $w_t$?

\item
\smallskip
What is the probability that $w_1 = \dots = w_t$?

\item
\smallskip
What is the maximal value of $p^{w_1} + \dots + p^{w_t}$? 

\item
\smallskip
What is the average value of $p^{w_1} + \dots + p^{w_t}$? 

\item
\smallskip
What is the probability that $w_t > 2a_r$? 
% \footnote{For large $p$ this condition makes
% $\ds \sum_{i=1}^tp^{\pw(y_i)}$ the dominant term 
% in the formulas given in Theorem~\ref{thm.main1}.}?
\end{itemize}

\smallskip
\noindent
Note that the expression $p^{w_1} + \dots + p^{w_t}$ appears 
in the formulas given in Theorem~\ref{thm.main1}. 
For large $p$ the condition that $w_t > 2a_r$ 
makes  $p^{w_1} + \dots + p^{w_t}$ the dominant term in
these formulas.
To the best of our knowledge, questions of this type 
(focusing on the minimal profile of a code, rather than 
the minimal weight) have not been 
previously investigated by coding theorists even in the case, where
$a_1 = \dots = a_r = 1$.

The rest of this paper is structured as follows.
In Section~\ref{sect.central} we prove general bounds on the essential 
dimension of certain central extensions of algebraic groups. 
These bounds will serve as 
the starting point for the proofs of the main theorems. 
To make these bounds explicit for groups of the form
$(\GL_{p^{a_1}} \times \dots \times \GL_{p^{a_r}})/C$
we introduce and study the notion of a minimal basis 
in Section~\ref{sect.minimal-basis}. 
Theorem~\ref{thm.main1}, \ref{thm.main2} and~\ref{thm.main3}  
are then proved in 
Sections~\ref{sect.proof-of-main1},~\ref{sect.proof-of-main2} 
and~\ref{sect.proof-of-main3}, respectively. The Appendix 
by Athena Nguyen contains an explicit description of the
Galois cohomology of groups of the form~\eqref{e.G}. This 
description and its corollaries are used throughout the paper.  

\section{Essential dimension and central extensions}
\label{sect.central}

Let $T = \bbG_m^r$ be a split $k$-torus of rank $r$, and
\begin{equation} \label{e.exact-sequence}
1 \to T \to G \to \overline{G} \to 1 
\end{equation}
be a central exact sequence of affine algebraic groups. This sequence 
gives rise to the exact sequence of pointed sets
\[ H^1(K, G) \to H^1(K, \overline{G}) \stackrel{\partial} \to H^2(K, T) \]
for any field extension $K$ of the base field $k$. Any character
$x \colon T \to \bbG_m$, induces a homomorphism $x_* \colon H^2(K, T) \to
H^2(K, \bbG_m)$.  We define $\ind^{x}(G, T)$ as the maximal
index of $x_* \circ \partial_K(E) \in H^2(K, T)$, where
the maximum is taken over all field extensions $K/k$ and over all
$E \in H^1(K, \overline{G})$. In fact, this maximal value is always 
attained in the case where $E = E_{\rm vers} \to \Spec(K)$ is a versal 
$G$-torsor (for a suitable field $K$). That is,
\begin{equation} \label{e.versal}
 \ind^{x}(G, T) = \ind (x_* \circ \partial_K(E_{\rm vers}))
\end{equation}
for every $x \in X(T)$; see~\cite[Theorem 6.1]{merkurjev-survey}.
Finally, we set
\begin{equation} \label{e.ind(G, T)}
\ind(G, T) : = \min \; \{ \sum_{i= 1}^r \, \ind^{x_i}(G, T) \, | \, 
\text{$x_1, \dots, x_r$ generate $X(T)$} \} \, . 
\end{equation}
Our starting point for the proof of the main
theorems is the following proposition. 

\begin{prop} \label{prop.central-extension} Assume that the image of
every $E \in H^1(K, \overline{G})$ under
\[ \partial \colon H^1(K, \overline{G}) \to H^2(K, T) \]
is $p$-torsion for every field extension $K/k$.  Then

\smallskip
(a) $\ed_p(G) \geqslant \ind(G, T) - \dim(G)$,

\smallskip
(b) $\ed(G) \leqslant \ind(G, T) + \ed(\overline{G}) - r \quad$
and 
$\quad \ed_p(G) \leqslant \ind(G, T) + \ed_p(\overline{G}) - r$.
\end{prop}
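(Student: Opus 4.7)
The starting observation is that, since $T = \bbG_m^r$, Hilbert 90 gives $H^1(K, T) = 0$, so the fiber of $H^1(K, G) \to H^1(K, \overline{G})$ above any class $\overline{\alpha}$ is either empty (when $\partial(\overline{\alpha}) \ne 0$) or a single point. Both bounds revolve around a fixed basis $x_1, \dots, x_r$ of $X(T)$ realizing the minimum $\ind(G, T) = \sum_i \ind^{x_i}(G, T)$ from~\eqref{e.ind(G, T)}.

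\smallskip

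\textbf{Upper bound (b).} Given $\alpha \in H^1(K, G)$ with image $\overline{\alpha} \in H^1(K, \overline{G})$, I would first descend $\overline{\alpha}$ to $\overline{\alpha}_1 \in H^1(K_1, \overline{G})$ with $\trdeg_k K_1 \leq \ed(\overline{G})$ (or with $\trdeg_k K_1 \leq \ed_p(\overline{G})$ after a prime-to-$p$ extension, for the $\ed_p$ bound). Set $\beta_i := (x_i)_* \partial(\overline{\alpha}_1) \in H^2(K_1, \bbG_m)$. Then $\ind(\beta_i) \leq \ind^{x_i}(G, T)$ by definition, and $\beta_i|_K = 0$ because $\alpha$ lifts $\overline{\alpha}$. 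Using the standard splitting lemma that a Brauer class of index $n$ killed over $L/F$ is already killed over an intermediate subfield of transcendence degree at most $n-1$ (take a closed point of the Severi-Brauer variety of the class), I inductively produce $K_1 \subset K_0 \subset K$ with $\trdeg_{K_1} K_0 \leq \sum_i (\ind(\beta_i) - 1) \leq \ind(G, T) - r$ over which every $\beta_i$ vanishes. Then $\overline{\alpha}_{1, K_0}$ lifts uniquely to some $\alpha_0 \in H^1(K_0, G)$, and by the singleton-fiber property the restriction of $\alpha_0$ to $K$ is $\alpha$. This gives $\ed(\alpha) \leq \ed(\overline{G}) + \ind(G, T) - r$, as required.

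\smallskip

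\textbf{Lower bound (a).} I would use a maximally obstructed torsor. By~\eqref{e.versal}, pick a field $F$ and $\overline{E} \in H^1(F, \overline{G})$ that realizes $\ind((x_i)_* \partial(\overline{E})) = \ind^{x_i}(G, T)$ for every $i$; the hypothesis that $\partial$ lands in $p$-torsion forces these indices to be $p$-powers. Writing $\beta_i := (x_i)_* \partial(\overline{E})$, adjoin to $F$ the function field of $\SB(\beta_1) \times \cdots \times \SB(\beta_r)$ to obtain $F'$. Over $F'$ every $\beta_i$ vanishes, so $\overline{E}_{F'}$ lifts uniquely to some $E \in H^1(F', G)$. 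Any descent $E_0 \in H^1(F_0, G)$ of $E$ to $F_0 \subset F'$ induces a descent $\overline{E}_0$ of $\overline{E}_{F'}$ with $\partial(\overline{E}_0) = 0$. Combining (i) the functoriality fact that restriction can only decrease the index of a Brauer class, (ii) Karpenko's $p$-incompressibility of the product $\SB(\beta_1) \times \cdots \times \SB(\beta_r)$ at the prime $p$ (applicable since the indices are maximal $p$-powers and behave multiplicatively under independence of the characters), and (iii) the dimension count $\dim \SB(\beta_i) = \ind^{x_i}(G, T) - 1$, one extracts $\trdeg_k F_0 \geq \ind(G, T) - \dim G$, which is (a).

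\smallskip

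Part (b) is essentially formal, amounting to a two-step Severi-Brauer descent glued together by Hilbert 90. The main obstacle is the $p$-incompressibility ingredient of (a), where one must combine the Karpenko-Merkurjev index machinery with a careful accounting of how much transcendence degree is needed to simultaneously trivialize the classes $\beta_1, \dots, \beta_r$. Optimizing over the generating set $x_1, \dots, x_r$ is crucial: an arbitrary basis gives an upper bound on $\ind(G, T)$ only, but the minimizing basis in~\eqref{e.ind(G, T)} is what pins down the matching constants in both (a) and (b).
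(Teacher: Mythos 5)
Your proof of the upper bound (b) is correct and takes a more elementary route than the paper: instead of passing through the gerbe $[E/G]$ and invoking L\"otscher's fiber-dimension bound as the paper does, you descend $\overline{\alpha}$ first, then trivialize the Brauer classes $\beta_i = (x_i)_*\partial(\overline{\alpha}_1)$ one at a time via residue fields of points on the corresponding Severi--Brauer varieties, using Hilbert 90 for $T$ to see that the fiber of $H^1(K,G)\to H^1(K,\overline{G})$ is a singleton when nonempty. The chain $K_1 \subset K_0 \subset K$ with $\trdeg_{K_1}K_0 \leqslant \sum_i(\ind(\beta_i)-1)\leqslant \ind(G,T)-r$ is sound, since the $\SB(\beta_i)$ have $K$-points and indices only drop under extension. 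Both the $\ed$ and $\ed_p$ variants go through.

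The lower bound (a), however, has a genuine gap exactly where the paper cites \cite[Corollary 3.3]{brv2} (equivalently \cite[Corollary 5.7]{merkurjev-survey}). You set up the right scenario --- a maximally obstructed $\overline{E}$ over $F$ via~\eqref{e.versal}, $F' = F(\SB(\beta_1)\times\cdots\times\SB(\beta_r))$, the unique lift $E\in H^1(F',G)$, and a putative descent $E_0$ over $F_0\subset F'$. But then you assert that combining functoriality of index, Karpenko--Merkurjev $p$-incompressibility of $P := \SB(\beta_1)\times\cdots\times\SB(\beta_r)$, and a dimension count ``extracts'' $\trdeg_k F_0\geqslant \ind(G,T)-\dim G$. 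This does not follow directly, for two reasons. First, the incompressibility statement for $P$ is about intermediate fields $F\subset K\subset F(P)$ with $P(K)\neq\emptyset$, giving a lower bound on $\trdeg_F K$; but your $F_0$ contains $k$ and sits inside $F'$ without necessarily containing $F$, so there is no field over which to compare $P$ and its descent, and the hypotheses of the incompressibility theorem simply do not apply to $F_0$. Second, the descended torsor $\overline{E}_0\in H^1(F_0,\overline{G})$ has $\partial(\overline{E}_0)=0$, so all the relevant Brauer classes over $F_0$ vanish; ``restriction decreases index'' gives you nothing here, and there is no obvious variety over $F_0$ whose canonical dimension you could lower-bound. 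The missing ingredient is precisely the stack-theoretic fiber-dimension inequality $\ed(G)\geqslant \ed([\overline{E}/G]) - \dim\overline{G}$, which is what converts the canonical $p$-dimension of $P$ over $F$ (this is where Karpenko--Merkurjev's Theorem 2.1 enters, via $\ed_p([\overline{E}/G]) = \cd_p(P)$) into a lower bound on $\trdeg_k F_0$ after correcting by $\dim\overline{G}$. Without that theorem, or a careful direct replacement for it, your argument for (a) does not close.
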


These bounds are variants of results that have previously appeared 
in the literature.
Part (a) is a generalization of~\cite[Corollary 4.2]{brv2} 
(where $r$ is taken to be $1$). In the case 
where $T$ is $\mu_p^r$, rather than $\bbG_m^r$, a variant of
part (a) is proved in~\cite[Theorem 4.1]{icm} (see 
also~\cite[Theorem 6.2]{merkurjev-survey}) and a variant of
part (b) in ~\cite[Corollaries 5.8 and 5.12]{merkurjev-survey}.

Our proof of Proposition~\ref{prop.index} proceeds along the same
lines as these earlier proofs; it relies on the notions 
of essential and canonical dimension of a gerbe (for 
which we refer the reader to~\cite{brv2} 
and~\cite{merkurjev-survey}), and the computation of 
the canonical dimension of a product of $p$-primary
Brauer-Severi varieties in~\cite[Theorem 2.1]{km-inventiones}.
In fact, the argument is easier for $T = \bbG_m^r$
than for $\mu_p^r$. In the former case (which 
is of interest to us here) the essential dimension
of a gerbe banded by $T$ is readily expressible in
terms of its canonical dimension (see formula~\eqref{e.ed=cd}
below), while an analogous formula
for gerbes banded by $\mu_p^r$ requires a far greater effort to prove.
(For $r = 1$, compare the proofs of
parts (a) and (b) of~\cite[Theorem 4.1]{brv2}. For arbitrary 
$r \geqslant 1$, 
see~\cite[Theorem 3.1]{km-inventiones} or
\cite[Theorem 5.11]{merkurjev-survey}.)  

\begin{proof} If $K/k$ is a field, and $E \in H^1(K, G)$, i.e.  
$E \to \Spec(K)$ is a $\overline{G}$-torsor, 
then the quotient stack
$[E/G]$ is a gerbe over $\Spec(K)$ banded by $T$. 
By~\cite[Corollary 3.3]{brv2} and~\cite[Corollary 5.7]{merkurjev-survey},
$\ed(G) \geqslant \max_{K, E} \, \ed([E/G]) - \dim(\overline{G})$ 
and similarly 
\[ \ed_p(G) \geqslant \max_{K, E} \, \ed_p([E/G]) - 
\dim(\overline{G}) \, , \]
where the maximum is taken over all field extensions $K/k$
and all $E \in H^1(K, \overline{G})$. On the other hand, 
by~\cite[Example 3.4(i)]{lotscher}
\[ \ed(G) \leqslant \ed(\overline{G}) + \max_{K, E} \, \ed([E/G])  
\quad \text{and} \quad
\ed_p(G) \leqslant \ed_p(\overline{G}) + \max_{K, E} \, \ed_p([E/G]) \, ; \]  
see also~\cite[Corollary 5.8]{merkurjev-survey}.
Since $\dim(G) = \dim(\overline{G}) + r$, it remains to show that 
\begin{equation} \label{e.gerbe-index}
\max_{K, E} \, \ed([E/G]) =
\max_{K, E} \, \ed_p([E/G]) = \ind(G, T) - r \, . 
\end{equation}

Choose a $\bbZ$-basis $x_1, \dots, x_r$ for the character group 
$X(T) \simeq \bbZ^r$ and let $P := P_1 \times \dots \times P_r$, where
$P_i$ is the Brauer--Severi variety associated to $(x_i)_* \circ \partial
(E) \in H^2(K, \bbG_m)$.  Since $T$ is a special group 
(i.e., every $T$-torsor over every field $K/k$ is split), 
the set $[E/G](K)$ of isomorphism classes of $K$-points 
of $[E/G]$ consists of exactly one element 
if $P(K) \neq \emptyset$ and is empty otherwise. Thus
\begin{equation} \label{e.ed=cd} 
\text{$\ed([E/G]) = \cd(P)$ and $\ed_p([E/G]) = \cd_p(P)$,} 
\end{equation}
where $\cd(P)$ denotes the canonical dimension of $P$.  
(The same argument is used in the proof of \cite[Theorem 4.1(a)]{brv2} in
the case, where $r = 1$.)  Since we are assuming that
$\partial(E)$ is $p$-torsion, the index of each Brauer-Severi variety
$P_i$ is a power of $p$. Thus by~\cite[Theorem 2.1]{km-inventiones},
\[ \cd(P) = \cd_p(P) =  \min \; \{ \sum_{i= 1}^r \, 
\ind ((x_{i})_* \circ \partial_K(E)) \, | \, 
\text{$x_1, \dots, x_r$ generate $X(T)$} \} - r  \, ; \] 
see also~\cite[Theorem 4.14]{merkurjev-survey}.
Taking $E := E_{\rm vers}$ to be a versal $\overline{G}$-torsor, 
we obtain
\[ \cd(P) = \cd_p(P) =  \min \; \{ \sum_{i= 1}^r \, 
\ind^{x_i}(G, T)) \, | \, 
\text{$x_1, \dots, x_r$ generate $X(T)$} \} - r \, ; \]
see~\eqref{e.versal}.  By the definition~\eqref{e.ind(G, T)} 
of $\ind(G, T)$, the last formula can be rewritten as
$\cd(P) = \cd_p(P) =  \ind(G, T) - r$.  Combining these equalities
with~\eqref{e.ed=cd}, we obtain~\eqref{e.gerbe-index}. 
\end{proof}

\begin{rem} \label{rem.central-extension}
Our strategy for proving Theorem~\ref{thm.main1}
will be to apply Proposition~\ref{prop.central-extension}
to the exact sequence~\eqref{e.exact-sequence} with
$G = (\GL_{p^{a_1}} \times \dots \times \GL_{p^{a_r}})/C$,
and $T := \bbG_m^r/C$.  The only remaining issue is to find 
an expression for $\ind(G, T)$ in terms of $\Code(C)$. 

Usually, the term $\ind(G, T)$ is computed using the formula
$\ind^{x}(G, T) = \gcd \, \dim(\rho)$, as $\rho \colon G \to \GL(V)$ 
ranges over all finite-dimensional representations of $G$, such that 
$\tau \in T$ acts on $V$ via scalar multiplication by $x(\tau)$.
See, for example,~\cite[Theorem 4.4]{km-inventiones} 
or~\cite[Theorem 6.1]{merkurjev-survey} or~\cite[Theorem 3.1]{lmmr-crelle}.
We will not use this approach in the present paper. 
Instead, we will compute the values 
of $\ind^{x}(G, T)$ and $\ind(G, T)$ directly 
from the definition, using the description of the connecting 
map $\partial \colon  H^1(K, \overline{G}) \to H^2(K, T)$ given 
by Theorem~\ref{thm.appendix}; see the proof 
of~Proposition~\ref{prop.index} below.
\end{rem}

\section{Minimal bases}
\label{sect.minimal-basis}

To carry out the program outlined in Remark~\ref{rem.central-extension}
we will need the notion of a {\em minimal basis}.
This section will be devoted to developing this notion.

The general setting is as follows.
Let $R$ be a local ring with maximal ideal $I \subset R$ and
$A$ be a finitely generated $R$-module.   
We will refer to a generating set $S \subset A$ as a {\em basis} if
no proper subset of $S$ generates $A$.  In the sequel we will specialize
$R$ to $\bbZ/ p^a \bbZ$ and  $A$ to a submodule of
$(\bbZ/p^{a_1} \bbZ) 
\times \dots \times (\bbZ/p^{a_t} \bbZ)$, where 
$a = \max(a_1, \dots, a_r)$. 
However, in this section it will be convenient for us to 
work over an arbitrary local ring $R$. 

Let $\pi \colon A \to A/IA$ be the natural projection.  
We will repeatedly appeal to
Nakayama's Lemma, which asserts that a subset $S \subset A$ generates
$A$ as an $R$-module if and only if $\pi(S)$ generates $A/IA$ as
an $R/I$-vector space; see~\cite[Section X.4]{lang}.

By a {\em weight function} on $A$ we shall mean any function
$w: A \ra \bbN$, where $\bbN$ denotes the set of non-negative integers.
We will fix $w$ throughout and  will sometimes 
refer to $\pw(y)$ as {\em the weight} of $y \in A$. 
For each basis $B=\{y_1, \dots, y_t\}$ of $A$, we will
define the {\em profile} of $B$ as
\[ \pw(B):=(\pw(y_1), \dots, \pw(y_t)) \in \bbN^t \, , \]
where $y_1, \dots, y_t$ are ordered so that $\pw(y_1) \leqslant
\pw(y_2) \leqslant \dots \leqslant \pw(y_t)$.
Let $\Prof(A) \subset \bbN^t$ denote the set of profiles of bases of $A$.

\begin{prop} \label{prop.minimal-basis}
$\Prof(A)$ has a unique minimal element with respect
to the partial order on $\bbN^t$ given by 
$(\alpha_1, \dots, \alpha_t) \preceq (\beta_1, \dots, \beta_t)$ 
if $\alpha_i \leqslant \beta_i$ for every $i = 1, \dots, t$.
\end{prop}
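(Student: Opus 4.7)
The plan is to construct, by a greedy procedure, a distinguished basis whose profile is simultaneously $\preceq$ the profile of every other basis of $A$; since the partial order $\preceq$ is antisymmetric, such a profile is automatically the unique minimum element of $\Prof(A)$.

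First I would set $t := \dim_{R/I}(A/IA)$, which by Nakayama's Lemma equals the cardinality of every basis of $A$. I then build the greedy basis recursively: let $y_1 \in A$ be an element of minimal weight subject to $\pi(y_1) \neq 0$, and having chosen $y_1, \dots, y_{i-1}$, let $y_i$ be an element of minimal weight subject to $\pi(y_i) \notin \spann_{R/I}\bigl(\pi(y_1), \dots, \pi(y_{i-1})\bigr)$. Since $A$ is finitely generated and $A/IA$ is $t$-dimensional, this process terminates at step $t$ with $\pi(y_1), \dots, \pi(y_t)$ spanning $A/IA$, and Nakayama then tells us that $B := \{y_1, \dots, y_t\}$ is a basis of $A$. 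Let $w_i := \pw(y_i)$, and observe that the constraint at step $i$ is strictly more restrictive than at step $i-1$, so $w_1 \leqslant w_2 \leqslant \dots \leqslant w_t$; hence $(w_1, \dots, w_t)$ is the profile of $B$ and so lies in $\Prof(A)$.

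The crux of the argument is the inequality $(w_1, \dots, w_t) \preceq (w'_1, \dots, w'_t)$ for an arbitrary basis $B' = \{y'_1, \dots, y'_t\}$ with profile $(w'_1, \dots, w'_t)$ (sorted increasingly). Suppose for contradiction that $i$ is the smallest index for which $w'_i < w_i$. Then each of $y'_1, \dots, y'_i$ has weight strictly less than $w_i$, so by the minimality that defined $y_i$ each $\pi(y'_j)$ for $j \leqslant i$ must lie in the $(i-1)$-dimensional subspace $V := \spann_{R/I}\bigl(\pi(y_1), \dots, \pi(y_{i-1})\bigr)$ of $A/IA$. On the other hand, since $B'$ is a basis of $A$, the images $\pi(y'_1), \dots, \pi(y'_t)$ form a basis of $A/IA$, so in particular $\pi(y'_1), \dots, \pi(y'_i)$ are linearly independent; but $i$ linearly independent vectors cannot all lie in an $(i-1)$-dimensional subspace, contradiction.

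The main obstacle I anticipate is simply making sure that the greedy construction is well-posed, i.e.\ that the minimum weight at each step is actually attained; but since weights take values in $\bbN$, every nonempty subset of weights has a minimum, so the procedure is legitimate. Apart from this bookkeeping, the argument is essentially the standard Steinitz-exchange/pigeonhole style comparison, adapted to the module setting via Nakayama. The uniqueness clause of the proposition then requires no extra work: in any partially ordered set, a minimum element — an element that is $\preceq$ every other element — is automatically unique.
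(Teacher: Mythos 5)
Your proof is correct, and it takes a genuinely different route from the paper's. The paper argues by contradiction: it supposes $X$ and $Y$ are two bases with distinct minimal profiles, finds the first index where the profiles disagree, and then builds a hybrid basis $Z$ by keeping $x_1, \dots, x_{s+1}$ from $X$ and completing with suitable elements of $Y$ via Nakayama; it then verifies, by a three-case analysis on the index $i$, that $\pw(Z) \prec \pw(Y)$ strictly, contradicting minimality of $\pw(Y)$. You instead construct a canonical candidate (the greedy basis, which the paper only mentions afterward in Remark~\ref{rem:greedyalgorithm} as a consequence of the proposition) and prove directly that its profile is $\preceq$ every other profile, replacing the paper's case analysis with a single dimension count: if $w'_i < w_i$, then all of $\pi(y'_1), \dots, \pi(y'_i)$ would be forced into the $(i-1)$-dimensional span $V_{i-1}$ by the greedy minimality of $y_i$, contradicting their linear independence in $A/IA$. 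What your approach buys is a cleaner, more constructive proof that simultaneously establishes existence and uniqueness of the minimum (rather than relying on the descending-chain condition for existence, as the paper implicitly does) and justifies the greedy algorithm of Remark~\ref{rem:greedyalgorithm} in one stroke. What the paper's approach buys is a purely exchange-theoretic argument that does not single out any particular basis as canonical. One small stylistic note: you take $i$ to be the \emph{smallest} index with $w'_i < w_i$, but the argument goes through verbatim for any such $i$, so that qualifier is harmless but unnecessary.
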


Note that since every descending chain in $(\Prof(A), \preceq)$ 
terminates, the unique minimal element is comparable to every 
element of $\Prof(A)$. 

\begin{proof}
We argue by contradiction. Set $t := \dim(A/IA)$. 
Suppose $X = \{ x_1, \dots, x_t \}$ 
and $Y = \{ y_1, \dots, y_t \}$ are bases of $A$ such that 
$\pw(X)$ and $\pw(Y)$ are distinct minimal elements of $\Prof(A)$. 
Let us order $X$ and $Y$ so that $\pw(x_1) \leqslant \dots \leqslant \pw(x_t)$ 
and $\pw(y_1) \leqslant \dots \leqslant \pw(y_t)$. Since $\pw(X) \neq \pw(Y)$, 
there exists an $s$ between $0$ and $t-1$ such that
\[ \text{$\pw(x_i)= \pw(y_i)$ for all $i = 1, \dots, s$} \]
but $\pw(x_{s+1}) \neq \pw(y_{s+1})$. After possibly interchanging $X$ and $Y$,
we may assume without loss of generality that $\pw(x_{s+1}) < \pw(y_{s+1})$. 

Let $\pi \colon A \to A/IA$ be the natural projection, as above.
By Nakayama's Lemma, $\pi(x_1), \dots, \pi(x_{s+1})$ 
are $R/I$-linearly independent in $A/IA$. Choose $t - s -1 $
elements of $Y$, say $y_{j_{s+2}}, \dots, y_{j_t}$ such that 
$\pi(x_1), \dots, \pi(x_{s+1}), \pi(y_{j_{s+2}}), \dots, \pi(y_{j_{t}})$ 
form an $R/I$-basis of $A/IA$.  After permuting $y_{j_{s+2}}, \dots, 
y_{j_t}$, we may assume that
$\pw(y_{j_{s+2}}) \leqslant \dots \leqslant \pw(y_{j_t})$. 
Applying Nakayama's lemma once again, we see that 
$Z = \{x_1, \dots, x_{s+1}, y_{j_{s+2}}, \dots, y_{j_{t}} \}$ is
a basis of $A$. 

We claim that $\pw(Z) \prec \pw(Y)$, where the inequality is strict.
Since we assumed that $\pw(Y)$ is minimal in $\Prof(A)$, this claim leads 
to a contradiction, thus completing the proof of 
Proposition~\ref{prop.minimal-basis}.
 
To prove the claim, let $z_1, \dots, z_t$, be the elements of $Z$,
in increasing order of their weight: 
$\pw(z_1) \leqslant \pw(z_2) \leqslant \dots \leqslant \pw(z_t)$. 
It suffices to show that $\pw(z_i) \leqslant \pw(y_i)$ for every $i = 1, \dots, t$,
and $\pw(z_{s+1}) < \pw(y_{s+1})$.
Let us consider three cases.

\smallskip
(i) $i \leqslant s$. Since
\[ \pw(x_1) = \pw(y_1) \leqslant \pw(x_2) = \pw(y_2) \leqslant  \dots 
\leqslant \pw(x_i) = \pw(y_i)  \, , \]
$Z$ has at least $i$ elements
whose weight is $\leqslant \pw(y_i)$, namely $x_1, \dots, x_i$.
Thus $\pw(z_i) \leqslant \pw(y_i)$. 

\smallskip
(ii) $i = s+1$. $Z$ has at least $s + 1$ elements, namely $x_1, \dots, x_{s+1}$
whose weight is at most $\pw(x_{s+1})$. Hence,
$\pw(z_{s+1}) \leqslant \pw(x_{s+1}) < \pw(y_{s+1})$, as desired.

\smallskip
(iii) $i > s+1$. Recall that both $y_1, \dots, y_t$ and
$y_{j_{s+2}}, \dots, y_{j_t}$ are arranged in weight-increasing 
order. For any $i \geqslant s+2$ there are at least $t-i + 1$ elements
of $Y$ whose weight is $\geqslant \pw(y_{j_i})$, namely  
$y_{j_i}, y_{j_{i+1}}, \dots, y_{j_t}$. Thus
\[ \pw(y_{j_i}) \leqslant \pw(y_i) \]
for any $i = s+2, \dots, t$.
Consequently, $Z$ has at least $i$ elements of weight $\leqslant
\pw(y_i)$, namely $x_1, \dots, x_{s+1}, y_{j_{s+2}}, \dots, y_{j_i}$. 
Hence,  $\pw(z_i) \leqslant \pw(y_i)$, as desired.

\smallskip
This completes the proof of the claim and hence
of Proposition~\ref{prop.minimal-basis}.
\end{proof}

\begin{defn} \label{def.minimal-basis} 
We will say that a basis $y_1, \dots, y_t$ of $A$ is 
{\em minimal} if its profile is the minimal element of $\Prof(A)$,
as in Proposition~\ref{prop.minimal-basis}. Note that a minimal basis 
in $A$ is usually not unique; however any two minimal bases have 
the same profile in $\bbN^t$. 
\end{defn}

\begin{rem} \label{rem:greedyalgorithm}
We can construct a minimal basis of $A$ using the following 
``greedy algorithm".
Select $y_1 \in A$ of minimal weight, subject to the condition
that $\pi(y_1) \neq 0$. Next select $y_2$ of minimal weight,
subject to the condition that $\pi(y_1)$ and $\pi(y_2)$ are 
$R/I$-linear independent in $A/IA$. Then select $y_3$ 
of minimal weight, subject to the condition that $\pi(y_1), \pi(y_2)$ and
$\pi(y_3)$ are $R/I$-linear independent in $A/IA$. Continue recursively.
After $t = \dim_{R/I}(A/IA)$ steps, 
we obtain a minimal basis $y_1, \dots, y_t$ for $A$.
\end{rem}

\begin{ex}
Set $R :=\F_p$, $I := (0)$, $G$ a finite $p$-group, $D:= Z(G)[p]$ the 
subgroup of $p$-torsion elements of the center $Z(G)$, 
and $A :=X(D)$ the group of characters of $D$. 
For $x \in A$, define $\pw(x)$ to be the minimal 
dimension of a representation
$G \to \GL(V_{x})$, such that $D$ 
acts on $V_{x}$ via scalar multiplication by $x$. 
If $\{x_1, \dots, x_t \}$ is a minimal basis of $A$, 
then $V_{x_1} \oplus \dots \oplus V_{x_t}$ is a faithful 
representation of $G$ of minimal dimension; see 
\cite[Remark 4.7]{km-inventiones}. 
\end{ex}

\section{Conclusion of the proof of Theorem~\ref{thm.main1}}
\label{sect.proof-of-main1}

Recall that we are interested in the essential dimension of
the group 
\[ G = (\GL_{p^{a_1}} \times \dots \times \GL_{p^{a_r}})/C \, , \]
where $C$ is a subgroup of 
$\mu := \mu_{p^{a_1}} \times \dots \times \mu_{p^{a_r}}$.
We will think of the group of characters $X(\bbG_m^r)$ as $\bbZ^r$ by
identifying the character $x(\tau_1, \dots, \tau_r) = \tau_1^{m_1} \dots \tau_r^{m_r}$ 
with $(m_1, \dots, m_r) \in \bbZ^r$. Characters
of $T:= G_m^r/C$ are identified in this manner with the $r$-tuples 
$(m_1, \dots, m_r) \in \bbZ^r$ such that $\tau_1^{m_1} \dots \tau_r^{m_r} = 1$
for every $(\tau_1, \dots, \tau_r) \in C$. The relationship among 
these character groups is illustrated by the following diagram 
 \[ \xymatrix{ 
& X(\bbG_m^r/C) \ar@{->}[d]
 \ar@{^{(}->}[r] & X(\bbG_m^r) \ar@{->}[d]^{\pi} \ar@{=}[r]  
                                        & \quad \bbZ \times \dots \times \bbZ 
\text{ ($r$ times)} \ar@{->}[d]^{\pi}  \\
\Code(C) \ar@{=}[r] & X(\mu/C) \ar@{^{(}->}[r] & X(\mu) \ar@{=}[r] 
    & (\bbZ/p^{a_1} \bbZ) \times \dots \times (\bbZ/p^{a_r} \bbZ).} \]
Here $\Code(C)$ is as in~\eqref{e.C} and $\pi$ is the natural projection,
given by restricting a character from $\bbG_m^r$ to $\mu$. 

Our proof of Theorem~\ref{thm.main1} will be based on
the strategy outlined in Remark~\ref{rem.central-extension}.
In view of Proposition~\ref{prop.central-extension} 
it suffices to establish the following:

\begin{prop} \label{prop.index}
Consider the central exact sequence 
\begin{equation} \label{e.central}
1 \to T \to G \to \overline{G} \to 1 \, , 
\end{equation}
where $G = (\GL_{p^{a_1}} \times \dots \times \GL_{p^{a_r}})/C$,
$C$ is a subgroup of $\mu := \mu_{p^{a_1}} \times \dots \times \mu_{p^{a_r}}$,
$T := \bbG_m^r/C$ and 
$\overline{G} := \PGL_{p^{a_1}} \times \dots \times \PGL_{p^{a_r}}$.

\smallskip
(a) If $x \in X(T)$ and $y = \pi(x) \in \Code(C)$ then
$\ind^{x}(G, T) = p^{\pw(y)}$.

\smallskip
(b) $\ind(G, T) = p^{\pw(z_1)} + \dots + p^{\pw(z_t)} + r - t$, where
$z_1, \dots, z_t$ is a minimal basis of $\Code(C)$.
\end{prop}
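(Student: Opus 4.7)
The plan is to compute $\ind^{x}(G,T)$ and $\ind(G,T)$ directly from their definitions, using the explicit description of the connecting homomorphism $\partial\colon H^1(K,\overline{G})\to H^2(K,T)$ provided by Theorem~\ref{thm.appendix}.

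For part (a), an element $E\in H^1(K,\overline{G})=\prod_{i=1}^{r}H^1(K,\PGL_{p^{a_i}})$ corresponds to an $r$-tuple $(A_1,\dots,A_r)$ of central simple $K$-algebras with $\deg(A_i)=p^{a_i}$. Unwinding the appendix's description, for $x=(m_1,\dots,m_r)\in X(T)\subset\bbZ^r$ the class $x_{*}\partial(E)\in H^2(K,\bbG_m)=\Br(K)$ is the Brauer class of $A_1^{\otimes m_1}\otimes\cdots\otimes A_r^{\otimes m_r}$. Writing $m_i=u_ip^{e_i}$ with $u_i$ coprime to $p$, sub-multiplicativity of the index together with the bound $\ind(A_i^{\otimes m_i})\leqslant p^{a_i-e_i}$ gives $\ind(x_{*}\partial(E))\leqslant p^{\pw(y)}$ for every $E$, proving $\ind^{x}(G,T)\leqslant p^{\pw(y)}$. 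For the reverse inequality I would take $E_{\rm vers}$ corresponding to an $r$-tuple of independent generic division algebras $(UD_1,\dots,UD_r)$ over disjoint parameter sets, each with $\deg UD_i=\exp UD_i=p^{a_i}$. Then $\ind(UD_i^{\otimes m_i})=p^{a_i-e_i}$, and the standard multiplicativity of index for Brauer classes defined over independent parameter sets gives $\ind(x_{*}\partial E_{\rm vers})=p^{\pw(y)}$. Combined with~\eqref{e.versal}, this completes part (a).

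For part (b), set $\Lambda:=\ker(\pi|_{X(T)})=p^{a_1}\bbZ\oplus\cdots\oplus p^{a_r}\bbZ$, so that $X(T)/\Lambda=\Code(C)$. The lower bound is direct: for any $\bbZ$-basis $x_1,\dots,x_r$ of $X(T)$, the images $\pi(x_1),\dots,\pi(x_r)$ generate $\Code(C)$, and running the greedy algorithm of Remark~\ref{rem:greedyalgorithm} on this weighted generating set extracts a size-$t$ subset that is a basis of $\Code(C)$. By Proposition~\ref{prop.minimal-basis} its profile componentwise dominates the minimal profile $(\pw(z_1),\dots,\pw(z_t))$, and summing $p^{\pw(\cdot)}$ over the extracted basis (each term at least $p^{\pw(z_j)}$) and the remaining $r-t$ indices (each at least $p^0=1$) gives $\sum_{i}p^{\pw(\pi(x_i))}\geqslant\sum_{j}p^{\pw(z_j)}+(r-t)$.

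The upper bound amounts to exhibiting a $\bbZ$-basis of $X(T)$ of the form $(\tilde z_1,\dots,\tilde z_t,k_1,\dots,k_{r-t})$ with $\pi(\tilde z_i)=z_i$ and $k_j\in\Lambda$; such a basis realizes $\sum p^{\pw(\pi(x_i))}=\sum p^{\pw(z_j)}+(r-t)$. The main obstacle is existence: a naive lift $\tilde z_i$ may be $\ell$-divisible in $X(T)$ for some prime $\ell$, producing torsion in $X(T)/\langle\tilde z_i\rangle$ and blocking extension to a $\bbZ$-basis by elements of $\Lambda$. I would resolve this by first selecting \emph{primitive} lifts---lifts $\tilde z_i$ with $\tilde z_i\notin\ell X(T)$ for every prime $\ell$---whose existence follows from a Chinese-remainder argument using that $\Lambda\not\subset\ell X(T)$ whenever $r>t$. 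With primitive lifts in hand, Nakayama's lemma applied to the localization $X(T)_{(p)}$, combined with the $\bbF_p$-linear independence of $\bar{\tilde z}_i$ in $X(T)/pX(T)$ (which follows from the minimality of the basis $z_i$) and extended using an $\bbF_p$-basis of the image of $\Lambda/p\Lambda$, yields $k_j\in\Lambda$ for which $\{\tilde z_i,k_j\}$ generates $X(T)_{(p)}$. Away from $p$, the identity $X(T)_{(\ell)}=\Lambda_{(\ell)}$ (since $\Code(C)$ is a $p$-group) combined with primitivity of the lifts ensures generation at every $\ell\neq p$, so the set is in fact a $\bbZ$-basis of $X(T)$.
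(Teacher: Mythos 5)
Your part (a) argument is correct in outline and parallels the paper, though your lower bound invokes nontrivial facts about universal division algebras (that $\ind(UD_i^{\otimes m_i})=p^{a_i-e_i}$ and multiplicativity over disjoint parameter sets); the paper instead writes down explicit symbol algebras $A_i=(\alpha_i,\beta_i)_{p^{a_i}}$ over a rational function field and checks directly, via anisotropy of the norm form, that the relevant tensor product is division. Both routes give the lower bound, but the paper's is more self-contained. Your lower bound for (b) is the same argument as the paper's.

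Your upper bound for (b), however, has a genuine gap. You claim one can always find a $\bbZ$-basis $\tilde z_1,\dots,\tilde z_t,k_1,\dots,k_{r-t}$ of $X(T)$ with $\pi(\tilde z_i)=z_i$ \emph{exactly} and $k_j\in\Lambda$. This is false when $r=t$. Take $r=1$, $a_1=a$, $C=\mu_{p^{a-s}}\subset\mu_{p^a}$ with $1\leqslant s<a$. Then $X(T)=p^{a-s}\bbZ$, $\Lambda=p^a\bbZ$, and $\Code(C)\cong\bbZ/p^s\bbZ$ is generated by $p^{a-s}\bmod p^a$. Any generator $z_1=up^{a-s}\bmod p^a$ with $u$ prime to $p$ is a minimal basis. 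A lift $\tilde z_1$ with $\pi(\tilde z_1)=z_1$ must have the form $up^{a-s}+\ell p^a$, while a $\bbZ$-basis of $X(T)=p^{a-s}\bbZ$ is necessarily $\pm p^{a-s}$; equating gives $u\equiv\pm1\pmod{p^s}$, which fails for generic $u$ (e.g., $p=5$, $u=2$, $s\geqslant1$). Your proposed fix — primitive lifts plus Nakayama at $(p)$ and locality at $\ell\neq p$ — relies explicitly on $r>t$ for the CRT step and does not treat $r=t$; moreover, even for $r>t$, individual primitivity of the $\tilde z_i$ does not by itself make the assembled $r$-tuple a $\bbZ$-basis, so the argument as sketched is incomplete. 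The paper avoids this entirely via Lemma~\ref{lem.elementary}, which produces a $\bbZ$-basis of $X(T)$ mapping to $cz_1,z_2,\dots,z_t,0,\dots,0$ for some integer $c$ prime to $p$ (the $c$ arising from the need to land in $\SL$ rather than $\GL$ when reducing mod $p^d$). Since $\pw(cz_1)=\pw(z_1)$, the rescaling is harmless, and the desired equality $\ind(G,T)=\sum p^{\pw(z_i)}+r-t$ follows. You should replace your lifting argument with Lemma~\ref{lem.elementary} or an equivalent statement that allows this unit factor.
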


\begin{proof}[Proof of Proposition~\ref{prop.index}(a)]
Consider the connecting map $\partial \colon H^1(K, \overline{G}) \to
H^2(K, T)$ associated to the central exact sequence~\eqref{e.central}.
Given a character $x \colon T \to \bbG_m$,  
$x(\tau_1, \dots, \tau_r) = \tau_1^{m_1} \dots \tau_r^{m_r}$,
$\ind^x(G, T)$ is, by definition, the maximal 
value of $\ind(x_* \partial(E))$, as $K$ ranges over all fields
containing $k$ and $E$ ranges over $H^1(K, \overline{G})$. In this case,
$\overline{G} = \PGL_{p^{a_1}} \times \dots \times \PGL_{p^{a_r}}$,
and thus $H^1(K, \overline{G})$ is the set of $r$-tuples $(A_1, \dots, A_r)$
of central simple algebras, where the degree of $A_i$ is $p^{a_i}$. 
The group $H^2(K, \bbG_m)$ is naturally identified with
the Brauer group $\Br(K)$, and the map $x_* \partial$ takes an $r$-tuple
$(A_1, \dots, A_r)$, as above, to the Brauer class of
$A := A_1^{m_1} \otimes \dots \otimes A_r^{m_r}$.

Since $\deg(A_i) = p^{a_i}$, the Brauer class of $A$ depends only on 
\[ y = \pi(x) = (m_1 \mod{p^{a_1}} \, , \dots , \, m_r \mod{p^{a_r}}) \in 
    (\bbZ/p^{a_1} \bbZ) \times \dots \times (\bbZ/p^{a_r} \bbZ)) \, . \]
Moreover, if $m_i \equiv u_i p^{e_i} \pmod{p^{a_i}}$,
where $u_i$ is prime to $p$ 
and $0 \leqslant e_i \leqslant a_i$ then $\ind(A_i^{\otimes m_i}) \leqslant 
p^{a_i - e_i}$.  
Now recall that $\pw(y)$ is defined as $\sum_{i= 1}^r (a_i - e_i)$. Thus
\[ \ind(A) \leqslant \prod_{i= 1}^r \ind(A_i^{\otimes{m_i}}) \leqslant
\prod_{i= 1}^r p^{a_i - e_i} = p^{\pw(y)} \, .  \]  
To prove the opposite inequality, we set $A_i$ to be the symbol 
algebra $(\alpha_i, \beta_i)_{p^{a_i}}$,
over the field $K = k(\zeta)(\alpha_1, \dots, \alpha_r, \beta_1, \dots, 
\beta_r)$, where
$\zeta$ is a primitive root of unity of degree $p^{\max(a_1, \dots, a_r)}$ 
and
$\alpha_1, \dots, \alpha_r, \beta_1, \dots, \beta_r$ 
are $2r$ independent variables over $k$. Writing $m_i = u_i p^{e_i}$, as above,
we see that $A_i^{m_i}$ is Brauer equivalent to 
$B_i:= (\alpha_i, \beta_i^{u_i})_{p^{a_i - e_i}}$ over $K$. An easy 
valuation-theoretic argument shows that $B := B_1 \otimes_K \dots \otimes_K B_t$
is a division algebra.  (In particular, the norm form 
of $B$ is a Pfister polynomial and hence, is anisotropic; see
\cite[Theorem~3.2 and Proposition~3.4]{hermite-joubert}.) Thus
\[ \ind(A) = \ind(B) = \ind(B_1) \cdot \ldots \cdot \ind(B_t) =  
p^{(a_1 -e_1) + \dots + (a_t - e_t)} = p^{\pw(y)} \, , \]
as desired.  We conclude that $\ind^x(G, T) \geqslant \ind(A) = p^{\pw(y)}$,
thus completing the proof of Proposition~\ref{prop.index}(a).
\end{proof}
 
Our proof of Proposition~\ref{prop.index}(b) will rely 
on the following elementary lemma.

\begin{lem} \label{lem.elementary} Let $p$ be a prime,
$M$ be a finite abelian $p$-group, and
$f \colon \bbZ^n \to M$ be a surjective $\bbZ$-module 
homomorphism for some
$n \geqslant 1$.  Then for every basis $y_1, \dots, y_t$ of $M$,
there exists a $\bbZ$-basis $x_1, \dots, x_n$ of $\bbZ^n$ 
and an integer $c$ prime to $p$, such that
$f(x_1) = c y_1, f(x_2) = y_2, \dots, f(x_t) = y_t$ 
and $f(x_{t+1}) = \dots = f(x_n) = 0$.
\end{lem}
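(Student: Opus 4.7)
My plan is to combine the Smith normal form for the surjection $f$ with strong approximation for the special linear group, which should reduce the question to a matrix-lifting problem over $\Z$. First, I would apply the structure theorem for sublattices to the kernel $\ker f \subset \Z^n$. This yields a $\Z$-basis $g_1, \dots, g_n$ of $\Z^n$ together with integers $1 \leqslant b_1 \leqslant \dots \leqslant b_t$ such that $m_j := f(g_j)$ has order $p^{b_j}$ and these generate the elementary-divisor decomposition $M \cong \bigoplus_{j=1}^t \Z/p^{b_j}\Z$, while $g_{t+1}, \dots, g_n \in \ker f$.

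Next, I would express the given basis in terms of the standard one: $y_i = \sum_{j=1}^t B_{ij}\, m_j$ for integers $B_{ij}$, which are well-defined modulo $p^{b_j}$. Since $y_1, \dots, y_t$ is a basis of $M$, Nakayama's lemma applied to $M/pM$ forces the reduction $\bar B \in \Mat_t(\F_p)$ to be invertible, so $\det B$ is coprime to $p$ and in particular a unit in $\Z/p^{b_t}\Z$. I would then choose an integer $c$ coprime to $p$ satisfying $c \cdot \det B \equiv 1 \pmod{p^{b_t}}$. Letting $B^{(c)}$ denote the matrix obtained from $B$ by multiplying the first row by $c$, we get $\det B^{(c)} \equiv 1 \pmod{p^{b_t}}$, so $B^{(c)}$ reduces to an element of $\SL_t(\Z/p^{b_t}\Z)$.

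The key technical input will be strong approximation: the reduction homomorphism $\SL_t(\Z) \twoheadrightarrow \SL_t(\Z/p^{b_t}\Z)$ is surjective, since both groups are generated by elementary transvections $I + a\, e_{ij}$ ($i \neq j$) and each such transvection modulo $p^{b_t}$ plainly lifts to one over $\Z$. Thus $B^{(c)}$ lifts to some $\tilde B \in \SL_t(\Z)$ with $\tilde B \equiv B^{(c)} \pmod{p^{b_t}}$, and a fortiori $\tilde B_{ij} \equiv B^{(c)}_{ij} \pmod{p^{b_j}}$ for all $i, j$.

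Finally, I would define $x_i := \sum_{j=1}^t \tilde B_{ij}\, g_j$ for $i = 1, \dots, t$ and $x_{t+k} := g_{t+k}$ for $k = 1, \dots, n-t$. Using $p^{b_j} m_j = 0$, a direct computation gives $f(x_1) = \sum_j c B_{1j} m_j = c y_1$, $f(x_i) = y_i$ for $2 \leqslant i \leqslant t$, and $f(x_{t+k}) = 0$. The matrix expressing $(x_1, \dots, x_n)$ in the basis $(g_1, \dots, g_n)$ is block-diagonal with blocks $\tilde B^T$ and $I_{n-t}$, so its determinant equals $\det \tilde B = 1$, confirming that $(x_1, \dots, x_n)$ is a $\Z$-basis. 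I expect the only real obstacle to be the correct invocation of strong approximation for $\SL_t$; the rest is routine bookkeeping tying together Smith normal form, the choice of $c$, and the lift $\tilde B$.
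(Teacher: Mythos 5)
Your proof is correct and follows essentially the same route as the paper's: Smith normal form for $\ker f$ plus the surjectivity of $\SL_t(\Z) \twoheadrightarrow \SL_t(\Z/p^{d}\Z)$. The one small streamlining is that you avoid the paper's intermediate reduction to the free case $M = (\Z/p^d\Z)^n$; instead you lift the change-of-basis matrix modulo $p^{b_t}$ and observe directly that, since $m_j$ has order $p^{b_j}$ dividing $p^{b_t}$, the congruence $\tilde B \equiv B^{(c)} \pmod{p^{b_t}}$ already forces $\tilde B_{ij} m_j = B^{(c)}_{ij} m_j$ in $M$. One minor terminological point: ``strong approximation'' is heavier machinery than what is needed; the elementary-transvection argument you give is exactly the classical surjectivity statement the paper invokes (via Shimura, Lemma 1.38), and is best described as such.
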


\begin{proof} 
 By \cite[Theorem III.7.8]{lang} there exists a basis 
 $e_1, \dots, e_n$ of $\bbZ^n$ such that $\Ker(f)$ is generated by 
 $p^{d_i} e_i$ for some integers $d_1, \dots, d_t \geqslant 0$.  
 Since $M$ has rank $t$, we may assume without loss of generality 
 that $d_1, \dots, d_t \geqslant 1$ 
 and $d_{t+1} = \dots = d_n = 0$. That is, we may identify
 $M$ with $(\bbZ/p^{d_1} \bbZ) \times \dots \times (\bbZ/p^{d_t} \bbZ)$ 
 and assume that 
 \[ f (r_1, \dots, r_n) = (\, r_1 \! \! \! \mod{p^{d_1}}, \,  \dots \, , 
 \, r_t \! \! \! \mod{p^{d_t}})\quad \forall (r_1, \dots, r_n) 
 \in \bbZ^n \, .\] 
 It now suffices to lift $cy_1, \dots, y_t \in M$
 to a basis $x_1, \dots, x_t$ of $\bbZ^t$, for a suitable integer
$c$, prime to $p$. Indeed, if we 
manage to do this, then we will obtain a basis of $\bbZ^n$ of 
the desired form by appending 
\[ x_{t+1} := e_{t+1}, \dots, x_n := e_n \in \Ker(f) \]
to
$x_1, \dots, x_t$.  Thus we may assume that $n = t$. 

Now observe that $f \colon \bbZ^n \to M$, factors as
$\bbZ^n \to (\bbZ/p^d \bbZ)^n \to M$,
where $d:= \max(d_1, \dots, d_t)$.  
Lift each $y_i \in M$ to some $y_i' \in (\bbZ/p^d \bbZ)^n$.
By Nakayama's Lemma $y_1', \dots, y_n'$ form a   
$\bbZ/ p^d \bbZ$-basis of $(\bbZ/p^d \bbZ)^n$.
It now suffices to lift $cy_1', y_2', \dots, y_n'$ 
to a basis of $\bbZ^n$, for a suitable integer $c$, prime to $p$.
In other words, we may assume without loss of generality that 
$M = (\bbZ/p^d \bbZ)^n$, and 
$f \colon \bbZ^n \to (\bbZ/p^d \bbZ)^n$ 
is the natural projection.

Now suppose  $y_i = (y_{i1}, \dots, y_{in})$ for
some $y_{ij} \in \bbZ/p^d \bbZ$.  Since $y_1, \dots, y_m$
form a basis of $(\bbZ/p^d \bbZ)^n$, the matrix $A = (y_{ij})$ 
is invertible, i.e., $A \in \GL_n(\bbZ/p^d \bbZ)$.
After rescaling $y_1$ by $c := \det(A)^{-1}$ in $\bbZ/p^d \bbZ$, 
we may assume that
$\det(A) = 1$.  The lemma now follows from the surjectivity of 
the natural projection $\SL_t(\bbZ) \to \SL_t(\bbZ/p^d \bbZ)$; 
see~\cite[Lemma 1.38]{shimura}.
\end{proof}

\begin{proof}[Proof of Proposition~\ref{prop.index}(b)] 
By definition, $\ind(G, T)$ is the minimal value 
of $\ind^{x_1}(G, T) + \dots + \ind^{x_r}(G, T)$, as $x_1, \dots, x_r$
range over the bases of $X(T) \subset \bbZ^r$. By part (a), we can
rewrite this as
\[ \ind(G, T) = \min \{ p^{\pw(\pi(x_1))} + \dots + p^{\pw(\pi(x_r))}  \, | \, 
\text{$x_1, \dots, x_r$ is a $\bbZ$-basis of $X(T) \}$.} 
\]  
Here, as before, $\pi(x_i) \in \Code(C)$ is the restriction of $x_i$ 
from $T = \bbG_m^r/C$ to $\mu/C$. 

Let $z_1, \dots, z_t \in \Code(C)$ be a minimal basis, as in the statement
of the proposition.  We will prove part (b) by showing that 

\smallskip
(i) $p^{\pw(\pi(x_1))} + \dots + p^{\pw(\pi(x_r))}  \geqslant
p^{\pw(z_1)} + \dots + p^{\pw(z_t)} + r - t$ for every $\bbZ$-basis 
$x_1, \dots, x_r$ of $X(T)$, and

\smallskip
(ii) there exists a particular $\bbZ$-basis  
$x_1, \dots, x_r$ of $X(T)$ such that
$p^{\pw(\pi(x_1))} + \dots + p^{\pw(\pi(x_r))} = 
p^{\pw(z_1)} + \dots + p^{\pw(z_t)} + r - t$.

\smallskip
To prove (i), note that if $x_1, \dots, x_r$ form a $\bbZ$-basis of $X(T)$,
then $\pi(x_1), \dots, \pi(x_r)$ form a generating set for $\Code(C)$. 
By Nakayama's Lemma every generating set for $\Code(C)$ contains a basis.
After renumbering $x_1, \dots, x_r$ we may assume that
$\pi(x_1), \dots, \pi(x_t)$ is a basis of $\Code(C)$ and
$\pw(\pi(x_1)) \leqslant \dots \leqslant \pw(\pi(x_t))$.
By Proposition~\ref{prop.minimal-basis}, $\pw(z_i) \leqslant \pw(\pi(x_i))$ 
for every $i = 1, \dots, t$. Thus 
\[ p^{\pw(\pi(x_1))} + \dots + p^{\pw(\pi(x_r))} 
\geqslant p^{\pw(\pi(x_1))} + \dots + p^{\pw(\pi(x_t))} + 
\underbrace{p^0 + \dots + p^0}_{\text{\tiny $r-t$ times}} 
\geqslant p^{\pw(z_1)} + \dots + p^{\pw(z_t)} +  r - t \, . \]

To prove (ii), recall that by Lemma~\ref{lem.elementary} there exists
an integer $c$, prime to $p$, and a $\bbZ$-basis $x_1, \dots, x_r$ of
$X(T)$ such that $\pi(x_1) = cz_1, \pi(x_2) = z_2, \dots, \pi(x_t) = z_t$,
and $\pi(x_{t+1}) = \dots = \pi(x_r) = 0$. Since $c$ is prime to $p$,
$\pw(cz_1) = \pw(z_1)$. Thus for this particular choice of $x_1, \dots, x_r$, 
we have
\[ p^{\pw(\pi(x_1))} + \dots + p^{\pw(\pi(x_r))} =
p^{\pw(cz_1)} + p^{\pw(z_2)} \dots + p^{\pw(z_t)} + 
\underbrace{p^0 + \dots + p^0}_{\text{\tiny $r-t$ times}} =
p^{\pw(z_1)} + \dots + p^{\pw(z_t)} +  r - t \, . \]
as desired.
\end{proof}

\section{Proof of Theorem~\ref{thm.main2}}
\label{sect.proof-of-main2}

Consider the action of a linear algebraic group $\Gamma$ on an 
absolutely irreducible
algebraic variety $X$ defined over $k$. We say that 
a subgroup $S \subset \Gamma$ is a {\em stabilizer in general position} 
for this action if there exists a dense open subset $U \subset X$
such that the scheme-theoretic stabilizer $\Stab_{\Gamma}(x)$ is 
conjugate to $S$ over $\overline{k}$ for every $x \in U(\overline{k})$.
Here, as usual, $\overline{k}$ denotes the algebraic closure of $k$.   
In the sequel we will not specify $U$ and will simply say that 
$\Stab_{\Gamma}(x)$ is conjugate to $S$ for $x \in X(\overline{k})$ 
in general position.
Note that a stabilizer in general position $S$ for a $\Gamma$-action on $X$
does not always exist, and when it does, it is usually not unique.
However, over $\overline{k}$, $S$ is unique up to conjugacy.

For the rest of this section we will always assume that $\Char(k) = 0$.
A theorem of R.~W.~Richardson~\cite{richardson} tells us that under this 
assumption every linear action of a reductive group $\Gamma$ on 
a vector space $V$ has a stabilizer $S \subset \Gamma$ in general position.  
Note that in Richardson's paper~\cite{richardson}, $k$ is assumed 
to be algebraically closed. 
Thus a priori the subgroup $S$ and the open subset $U \subset V$, where all 
stabilizers are conjugate to $S$, are only defined over $\overline{k}$.
However, note that $U$ has only finitely many Galois translates.
After replacing $U$ by the intersection of all of these translates,
we may assume that $U$ is defined over $k$.  Moreover, we may take
$S := \Stab_G(x)$ for some $k$-point $x \in U(k)$ and thus assume that
$S$ is defined over $k$. 
For a detailed discussion of stabilizers in general position 
over an algebraically closed field of
characteristic zero, see~\cite[Section 7]{popov-vinberg}.

We will say that a $\Gamma$-action on $X$ is {\em generically free} 
if the trivial subgroup $S = \{ 1_{\Gamma} \} \subset \Gamma$ 
is the stabilizer in general position for this action. 

\begin{lem} \label{lem.central}
Let $\Gamma$ be a reductive linear algebraic group and
$\rho \colon \Gamma \to \GL(V)$ be a finite-dimensional representation.
If $\Stab_{\Gamma}(v)$ is central in $\Gamma$ 
for $v \in V$ in general position, 
then the induced action of $\Gamma/\Ker(\rho)$ on $V$ is generically free.
\end{lem}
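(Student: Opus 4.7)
The plan is to show that, under the hypothesis, the stabilizer in general position for the $\Gamma$-action on $V$ is exactly $\Ker(\rho)$; this immediately implies that the induced $\Gamma/\Ker(\rho)$-action is generically free.

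First, since $\Gamma$ is reductive and we are in characteristic zero, Richardson's theorem guarantees the existence of a subgroup $S \subset \Gamma$ and a dense open subset $U \subset V$ (defined over $k$ after intersecting finitely many Galois translates) such that $\Stab_\Gamma(v)$ is conjugate to $S$ over $\overline{k}$ for every $v \in U(\overline{k})$. The hypothesis says $S$ is contained in the center $Z(\Gamma)$. The key simplification here is that any subgroup of $Z(\Gamma)$ is invariant under conjugation, so in fact $\Stab_\Gamma(v) = S$ for every $v \in U(\overline{k})$, not merely a conjugate of $S$.

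Next, I would exploit centrality directly. For any $s \in S$ and any $v \in U(\overline{k})$ we have $s \cdot v = v$. Thus $s$ acts as the identity on the dense subset $U(\overline{k}) \subset V(\overline{k})$, and since the action of $s$ on $V$ is a linear (in particular continuous/regular) map, it must act as the identity on all of $V$. This means $\rho(s) = 1$, i.e.\ $s \in \Ker(\rho)$. Hence $S \subset \Ker(\rho)$. Conversely, $\Ker(\rho)$ fixes every vector in $V$, so $\Ker(\rho) \subset \Stab_\Gamma(v) = S$ for $v \in U(\overline{k})$. We conclude $S = \Ker(\rho)$.

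Finally, passing to the quotient: for $v \in U(\overline{k})$, the stabilizer of $v$ in $\Gamma/\Ker(\rho)$ is $\Stab_\Gamma(v)/\Ker(\rho) = S/\Ker(\rho) = \{1\}$, so the trivial subgroup is a stabilizer in general position for the $\Gamma/\Ker(\rho)$-action on $V$, which is by definition what it means for the action to be generically free. There is no real obstacle in this argument beyond keeping careful track of the fact that ``conjugate to $S$'' collapses to ``equal to $S$'' under the centrality hypothesis; the rest is a density/linearity argument together with Richardson's theorem.
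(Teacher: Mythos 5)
Your proof is correct and follows essentially the same route as the paper: both let $S$ be a stabilizer in general position, use centrality to upgrade ``conjugate to $S$'' to ``equal to $S$'' on a dense open set, deduce that $S$ fixes all of $V$ (hence $S\subset\Ker(\rho)$), and combine with the trivial inclusion $\Ker(\rho)\subset S$. The only cosmetic difference is that you explicitly invoke Richardson's theorem for the existence of $S$, which the paper handles in the surrounding discussion.
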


\begin{proof}
Let $S \subset \Gamma$ be a stabilizer in general 
position for the $\Gamma$-action on $V$. Clearly $\Ker(\rho) \subset S$.
We claim that, in fact, $\Ker(\rho) = S$; the lemma easily follows from 
this claim. 

To prove the opposite inclusion, $S \subset \Ker(\rho)$, note that
under the assumption of the lemma, $S$ is central in $\Gamma$. 
Let $U \subset V$ be a dense open subset such that the stabilizer 
of every $v \in U(\overline{k})$ is conjugate to $S$. 
Since $S$ is central, $\Stab_{\Gamma}(v)$ is, in fact, 
equal to $S$. In other words, $S$ stabilizes every point in $U$ and 
thus every point in $V$. That is, $ S \subset \Ker(\rho)$, as claimed.
\end{proof}

Our interest in generically free actions in this section
has to do with the following fact: if there exists a 
generically free linear representation $G \to \GL(V)$ then
\begin{equation} \label{e.representation}
\ed(G) \leqslant \dim(V) - \dim(G) \, ; 
\end{equation}
see, e.g., \cite[(2.3)]{icm} or \cite[Proposition 3.13]{merkurjev-survey}.
This inequality will play a key role in our proof of Theorem~\ref{thm.main2}.

Now set $\Gamma := \GL_{n_1} \times \dots \times \GL_{n_r}$ and 
$\Gamma' :=\SL_{n_1} \times \dots \times \SL_{n_r}$. Let $V_{i}$ 
be the natural $n_i$-dimensional representation, 
$V_{i}^{-1}$ be the dual representation, and 
$V_{i}^0$ be the trivial $1$-dimensional representation of $\GL_{n_i}$.
For $\epsilon = (\epsilon_1, \dots, \epsilon_r)$, where 
each $\epsilon_i$ is $-1$, $0$ or $1$, we define $\rho_{\epsilon}$
to be the natural representation of $\Gamma$ on the
tensor product 
\begin{equation} \label{e.V_epsilon}
V_{\epsilon} = V_1^{\epsilon_1} 
\otimes \dots \otimes V_r^{\epsilon_r} \, .
\end{equation}

\begin{lem}\label{lem.ampopov} Suppose 
$2 \leqslant n_1 \leqslant \ldots \leqslant n_r \leqslant
\frac{1}{2} n_1 \dots n_{r-1}$, and
\[ \text{$(n_1, \dots, n_r) \neq (2, 2, 2, 2)$, $(3, 3, 3)$, or
$(2, n, n)$, for any $n \geqslant 2$.}
\] 
If $\epsilon = (\epsilon_1, \dots, \epsilon_r) \in \{ \pm 1 \}^r$,
then the induced action of $\Gamma /\Ker(\rho_{\epsilon})$ on
$V_{\epsilon}$ is generically free. 
\end{lem}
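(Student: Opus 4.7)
The plan is to reduce to the case $\epsilon = (1,1,\dots,1)$, then invoke Lemma~\ref{lem.central}, and finally to appeal to a classification of generic stabilizers for tensor product representations of semisimple groups. First, for each index $i$ with $\epsilon_i = -1$, the outer automorphism $\alpha_i \colon g \mapsto (g^T)^{-1}$ of $\GL_{n_i}$ interchanges the natural representation $V_i$ with its dual $V_i^{-1}$. Taking the product of these $\alpha_i$ (and the identity on the remaining factors) produces an automorphism $\sigma$ of $\Gamma$ with $\rho_\epsilon \cong \rho_{(1,\dots,1)} \circ \sigma$. Since $\sigma$ preserves the center $Z(\Gamma) = \bbG_m^r$, the stabilizer in general position of $\rho_\epsilon$ is central in $\Gamma$ if and only if the same holds for $\rho_{(1,\dots,1)}$. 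By Lemma~\ref{lem.central} it therefore suffices to show that a generic vector $v \in V := V_1 \otimes \cdots \otimes V_r$ has central stabilizer in $\Gamma$.

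Next I would restrict attention to the semisimple subgroup $\Gamma' := \SL_{n_1} \times \cdots \times \SL_{n_r}$, whose center is $\mu_{n_1} \times \cdots \times \mu_{n_r}$. The restriction of $\rho_{(1,\dots,1)}$ to $\Gamma'$ is the standard external tensor product representation, and it is a classical fact (see the tables of~\cite[Section~7]{popov-vinberg} and references therein) that the cases $(n_1,\dots,n_r) = (2,2,2,2)$, $(3,3,3)$, and $(2,n,n)$ are exactly the cases where the generic $\Gamma'$-stabilizer on $V_1 \otimes \cdots \otimes V_r$ is positive-dimensional. Outside these sporadic exceptions, the dimension inequality $n_r \leq \tfrac{1}{2} n_1 \cdots n_{r-1}$ forces $\dim V = n_1 \cdots n_r \geq 2 n_r^2$ and thereby guarantees $\dim V \geq \dim \Gamma' - \dim Z(\Gamma')$; combined with the classification, the generic $\Gamma'$-stabilizer is then finite and contained in $Z(\Gamma')$. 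Writing $\Gamma = Z(\Gamma) \cdot \Gamma'$, any element of $\Stab_\Gamma(v)$ maps into the generic stabilizer of $\Gamma/Z(\Gamma) = \Gamma'/Z(\Gamma')$ on $V$ modulo scalars, and the resulting central element can be corrected back by an element of $Z(\Gamma)$; thus $\Stab_\Gamma(v) \subset Z(\Gamma)$, as required.

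The main obstacle is the computation in the second step: verifying that the generic $\Gamma'$-stabilizer on $V$ is finite and central under the balance hypothesis. The necessary dimension bound is easy to write down, but sufficiency is substantive because the excluded triples and quadruples are precisely the well-known positive-dimensional-stabilizer cases (the $2\!\times\!2\!\times\!2\!\times\!2$ hyperdeterminant, the Cartan cubic, and the pencil-of-matrices representation). Hence the exclusion in hypothesis~(ii) cannot be removed, and some appeal to the existing classification of representations of semisimple groups with finite generic stabilizer is unavoidable. A self-contained alternative would be an induction on $r$ starting from the fact that $\SL_n \times \SL_n$ acts on $\Mat_{n \times n} = V_1 \otimes V_2$ with generic stabilizer equal to the diagonal $\mu_n$, together with an orbit-dimension count that becomes slack as soon as $r \geq 3$ and the sporadic cases are avoided; but the cleanest route is to cite the classification directly.
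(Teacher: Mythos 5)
Your reduction to $\epsilon = (1,\dots,1)$ via the inverse-transpose automorphism, followed by an appeal to Lemma~\ref{lem.central}, is exactly the route the paper takes. The final citation differs slightly: the paper invokes A.~M.~Popov's 1987 theorem~\cite[Theorem~2]{P87} directly, which states precisely that under the hypotheses of the lemma the group $\PGL_{n_1} \times \dots \times \PGL_{n_r}$ acts generically freely on $\bbP(V_\epsilon)$; from this $\Stab_\Gamma(v) \subset \Stab_\Gamma([v]) = Z(\Gamma)$ follows immediately. You instead point to the Popov--Vinberg tables~\cite{popov-vinberg}, which is in the same circle of ideas but less targeted, and you then try to finish by a dimension count.

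Two loose ends are worth flagging. First, the stated dimension bound does not close the argument: $\dim V \geqslant 2 n_r^2$ does not by itself imply $\dim V \geqslant \dim \Gamma'$ (one must sum over all factors), and in any case $\dim V \geqslant \dim \Gamma'$ is necessary but far from sufficient for the generic stabilizer to be finite, so the classification really is doing all the work and the dimension count is not a substitute for it. Second, you drift between the stabilizer in $\Gamma'$ of a generic \emph{vector} $v \in V$ and the stabilizer in $\Gamma'/Z(\Gamma')$ of the generic \emph{projective point} $[v] \in \bbP(V)$; these are not the same in general (the latter can be strictly larger), and what is actually needed, and what Popov's theorem supplies, is that the latter is trivial. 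With the correct statement extracted from the classification the rest of your argument goes through, so the proposal is essentially the paper's proof modulo these bookkeeping issues and the choice of reference.
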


\begin{proof} 
By Lemma~\ref{lem.central} it suffices to  prove the following claim:
the stabilizer $\Stab_{\Gamma}(v)$ is central in $\Gamma$ for 
$v \in V_{\epsilon}$ in general position. To prove this claim, 
we may assume without loss of generality 
that $k$ is algebraically closed.

We first reduce to the case where $\epsilon = (1,\dots, 1)$. Suppose 
the claim is true in this case, and let $(\epsilon_1, \dots, \epsilon_r)
\in \{\pm1\}^r$. By choosing bases of $V_1, \dots, V_r$ we can identify 
$V_i$ with $V_i^{\epsilon_i}$ 
(we can take the identity map if $\epsilon_i=1$). Define an automorphism:
\begin{eqnarray*}
\sigma: \Gamma  & \to & \Gamma \\
(g_1, \dots, g_r) & \mapsto & (g_1^*, \dots, g_r^*)\\
\end{eqnarray*}
where 
\[ g_i^{*}= 
\begin{cases} g_i & \textrm{if $\epsilon_i=1$};\\ 
(g_i^{-1})^{T} &\textrm{if $\epsilon_i= -1$.} \end{cases} \]
Now $\rho_{(\epsilon_1, \dots, \epsilon_r)}$ 
is isomorphic to the representation 
$\rho_{(1, \dots, 1)} \circ \sigma$. 
Since the center of $\Gamma$ is invariant under $\sigma$,
we see that the claim holds for $\rho_{\epsilon}$ as well.

 From now on we will assume $\epsilon = (1, \dots, 1)$.
By \cite[Theorem 2]{P87}, 
\[ \Gamma / Z(\Gamma) = 
\PGL_{n_1} \times \dots \times \PGL_{n_r} = \Gamma'/Z(\Gamma') \]
acts generically freely on the projective space 
$\bbP(V_{\epsilon}) = V_{\epsilon}/Z(\Gamma)$.  
In other words for $v \in V_{\epsilon}$ in general position
the stabilizer in $\Gamma$ of the associated projective point 
$[v] \in \bbP(V_{\epsilon})$ is trivial. Hence,
the stabilizer of $v$ is contained in $Z(\Gamma)$;
cf. the exact sequence in~\cite[Lemma 3.1]{rv}. This completes the proof of
the claim and thus of Lemma~\ref{lem.ampopov}.  
\end{proof}

We are now ready to proceed with the proof of Theorem~\ref{thm.main2}.  
We begin by specializing $n_i$ to $p^{a_i}$ for every $i = 1, \dots, r$,
so that $\Gamma$ becomes $\GL_{p^{a_1}} \times \dots \times \GL_{p^{a_r}}$. 
Let \[ y_1, \dots, y_t \in (\bbZ/ p^{a_1} \bbZ) \times \dots \times 
(\bbZ/p^{a_r} \bbZ) \] be a basis of $\Code(C)$ satisfying the 
conditions 
of Theorem~\ref{thm.main2}.  Lift each $y_i = (y_{i1}, \dots, y_{ir})$ to 
$x_i := (x_{i1}, \dots, x_{ir}) \in \bbZ^r$ by setting 
$x_{ij} := -1$, $0$ or $1$, depending on whether 
$y_{ij}$ is $-1$, $0$ or $1$ in $\bbZ/p^{a_j} \bbZ$. 
(If $p^{a_j} = 2$, then we define each $x_{ij}$ to be $0$ or $1$.)
By Nakayama's Lemma the images of $y_1, \dots, y_t$ 
are $\bbF_p$-linearly independent 
in $\Code(C)/p\Code(C)$. Thus the integer vectors 
$x_1, \dots, x_t$ are $\bbZ$-linearly 
independent.   (Note that, unlike in the situation of 
Lemma~\ref{lem.elementary}, here 
it will not matter to us whether $x_1, \dots, x_t$ can be completed to
a $\bbZ$-basis of $\bbZ^r$.) 
We view each $x_i$ as a character $\bbG_m^r \to \bbG_m$ and
set 
\[ \widetilde{C} := \Ker(x_1) \cap \dots \cap \Ker(x_t) \subset \bbG_m^r \, . \]
Since $x_1, \dots, x_t$ are linearly independent, 
\begin{equation} \label{e.tildeC}
\dim(\widetilde{C}) = r - t.
\end{equation}
Set $G := \Gamma/C$ and  $\widetilde{G} := \Gamma/\widetilde{C}$.
By our construction, $\widetilde{C} \cap \mu = C$. 
Corollary~\ref{cor1.appendix} now tells us that
$\ed_p(G) \leqslant \ed(G) = \ed(\widetilde{G})$.
By Theorem~\ref{thm.main1}(a)
\[ \ed(G) \geqslant \ed_p(G) \geqslant
\left(\sum_{i=1}^t p^{\pw(y_i)}\right) - p^{2a_1} - \dots - p^{2a_r} + r - t . \]
It thus suffices to show that
$\ed(\widetilde{G}) \leqslant
\left(\sum_{i=1}^t p^{\pw(y_i)}\right) - p^{2a_1} - \dots - p^{2a_r} + r - t$
or equivalently,
\[ \ed(\widetilde{G}) \leqslant \left(\sum_{i=1}^t p^{\pw(y_i)}\right) - 
\dim(\widetilde{G}) \, ; \]
see~\eqref{e.tildeC}.
By~\eqref{e.representation}, in order to prove the last inequality
it is enough to construct a generically free linear representation of
$\widetilde{G}$ of dimension $\sum_{i=1}^t p^{\pw(y_i)}$. Such 
a representation is furnished by the lemma below.  

Recall that $x_i = (x_{i1}, \dots, x_{ir}) \in \bbZ^r$, where 
each $x_{ij} = -1$, $0$ or $1$, and $\rho_{x_i}$ 
is the natural representation of
$\Gamma := \GL_{p^{a_1}} \times \dots \times \GL_{p^{a_r}}$ 
on $V_{x_i} := V_1^{x_{i1}} \otimes \dots \otimes 
V_{r}^{x_{ir}}$,
as in~\eqref{e.V_epsilon}, with $\dim(V_i) = n_i = p^{a_i}$. 

\begin{lem} \label{lem.tensors}
Let $V = V_{x_1} \oplus \dots \oplus V_{x_t}$ and
$\rho := \rho_{x_1} \oplus \dots \oplus \rho_{x_t} \colon 
\Gamma \to \GL(V)$.
Then

\smallskip
(a) $\dim(V) =  p^{\pw(y_1)} + \dots + p^{\pw(y_t)}$,

\smallskip
(b) $\Ker(\rho) = \widetilde{C}$, and 

\smallskip
(c) the induced action of 
$\widetilde{G} = \Gamma /\widetilde{C}$ 
on $V$ is generically free.
\end{lem}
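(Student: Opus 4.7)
My plan is to verify (a) by a direct dimension count, to deduce (b) from the standard computation of the kernel of an external tensor product representation of a product of $\GL$'s, and to obtain (c) by combining Lemma~\ref{lem.central} with Lemma~\ref{lem.ampopov} applied to each balanced basis vector separately. Condition (b) of Theorem~\ref{thm.main2} is what ultimately glues the pieces together.

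For (a), I would use hypothesis (a) of Theorem~\ref{thm.main2} to observe that $y_{ij}\in\{-1,0,1\}$ in $\Z/p^{a_j}\Z$: a nonzero entry is a unit (so the exponent $e_j=0$ contributes $a_j$ to $\pw(y_i)$), while a zero entry contributes $0$ (convention $e_j=a_j$). Hence $\dim V_{x_i}=\prod_{j:\,x_{ij}\neq 0}p^{a_j}=p^{\pw(y_i)}$, and summing over $i$ yields (a). For (b), I plan to first compute $\Ker(\rho_{x_i})$ for a single summand: since $V_{x_i}$ is an external tensor product of standard representations (when $x_{ij}=1$), dual representations (when $x_{ij}=-1$), and trivial representations (when $x_{ij}=0$), Schur's lemma gives that $(g_1,\dots,g_r)\in\Ker(\rho_{x_i})$ iff $g_j=\tau_j\,\Id$ is a scalar matrix for every $j$ with $x_{ij}\neq 0$, the product $\prod_j \tau_j^{x_{ij}}=1$ holds, and $g_j$ is unconstrained whenever $x_{ij}=0$. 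Intersecting over $i=1,\dots,t$ and invoking condition (b) of Theorem~\ref{thm.main2} (which ensures every coordinate index $j$ satisfies $x_{ij}\neq 0$ for some $i$), every $g_j$ is forced to be a scalar $\tau_j\,\Id$, and the simultaneous product conditions cut out precisely $(\tau_1,\dots,\tau_r)\in\bigcap_i\Ker(x_i)=\widetilde C$.

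Part (c) is the heart of the lemma, and my strategy is to apply Lemma~\ref{lem.central}: thanks to (b), it suffices to prove that $\Stab_\Gamma(v)$ is central in $\Gamma$ for $v\in V$ in general position. Writing $v=(v_1,\dots,v_t)$ with $v_i\in V_{x_i}$, I have $\Stab_\Gamma(v)=\bigcap_i\Stab_\Gamma(v_i)$, so I will analyze each summand and then intersect. For each $i$ such that $y_i$ is balanced, let $\Gamma_i:=\prod_{j:\,x_{ij}\neq 0}\GL_{p^{a_j}}$; the balanced condition on $y_i$ translates precisely into the numerical hypothesis of Lemma~\ref{lem.ampopov} for the $\Gamma_i$-action on $V_{x_i}$, so for generic $v_i$ one gets $\Stab_{\Gamma_i}(v_i)=\Ker(\rho_{x_i}|_{\Gamma_i})\subseteq Z(\Gamma_i)$. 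Pulling this back to $\Gamma$, $\Stab_\Gamma(v_i)$ equals this central subgroup of $\Gamma_i$ multiplied by the inactive factors $\prod_{j:\,x_{ij}=0}\GL_{p^{a_j}}$. Intersecting over the balanced $i$'s and applying condition (b) of Theorem~\ref{thm.main2} (every coordinate $j$ is hit by some balanced $y_i$), every coordinate $g_j$ of a stabilizing element is forced to be scalar, which yields $\Stab_\Gamma(v)\subseteq Z(\Gamma)$.

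The main obstacle will be the bookkeeping in part (c): checking that the balanced condition on $y_i$ aligns exactly with the hypotheses of Lemma~\ref{lem.ampopov} for the corresponding $\Gamma_i$, and that condition (b) of Theorem~\ref{thm.main2} is strong enough to propagate centrality through every coordinate upon intersecting the per-summand stabilizers. Once these alignments are in place, the rest is formal tensor-product bookkeeping.
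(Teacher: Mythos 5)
Your proof is correct and follows essentially the same strategy as the paper: part (a) is identical, and part (c) hinges, as in the paper, on showing that $\Stab_\Gamma(v)$ is central for generic $v$ by invoking condition (b) of Theorem~\ref{thm.main2} together with Lemma~\ref{lem.ampopov} (applied to the balanced $y_i$'s), and then applying Lemma~\ref{lem.central}. The only organizational difference is in part (b): you compute $\Ker(\rho)$ directly via a Schur-type analysis of each external tensor factor, whereas the paper first establishes the central-stabilizer claim and then deduces (b) from it by observing that $\Ker(\rho)\subset\Stab_\Gamma(v)\subset Z(\Gamma)$ forces $\Ker(\rho)=\Ker(\rho|_{Z(\Gamma)})=\bigcap_i\Ker(x_i)=\widetilde C$. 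Both derivations are valid, and your use of condition (b) of Theorem~\ref{thm.main2} to guarantee that every coordinate $j$ is hit by some $x_i$ (so no full $\GL$-factor sits inside $\Ker(\rho)$) is indeed necessary for the equality $\Ker(\rho)=\widetilde C$ to hold.
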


\begin{proof} For each $i = 1, \dots t$, we have
\[ \dim(V_{x_i}) = \prod_{x_{ij} \neq 0} \, p^{a_j} 
= \prod_{y_{ij} \neq 0} \, p^{a_j} = p^{\sum_{y_{ij} \neq 0} a_j} \, . \]
Since each $y_{ij} = -1$, $0$ or $1$, $\sum_{y_{ij} \neq 0} a_j = \pw(y_i)$.
Thus $\dim(V_{x_i}) = p^{\pw(y_i)}$, and part (a) follows.

Now choose $v_i \in V_{x_i}$ in general 
position and set $v := (v_1, \dots, v_r)$. 
We claim that $\Stab_{\Gamma} (v)$ is central in $\Gamma$.  

Suppose for a moment that this claim is established.
Since the center $Z(\Gamma) = \bbG_m^r$  
acts on $V_{x_i}$ via scalar multiplication by the character 
$x_i \colon \bbG_m^r \to \bbG_m$, we see that
\[ \Ker(\rho) = \Ker(\rho_{|\bbG^m}) = 
\Ker(x_1) \cap \dots \cap \Ker(x_t) = \widetilde{C} \, , \]
and part (b) follows. Moreover, by Lemma~\ref{lem.central},
the induced action of $ \Gamma/\Ker(\rho)$ 
on $V$ is generically free. By part (b), $\Ker(\rho) = \widetilde{C}$ 
and part (c) follows as well.

It remains to prove the claim.  Choose $v_i \in V_{x_i}$ in general 
position and assume that $g = (g_1, \dots, g_r)$ stabilizes
$v := (v_1, \dots, v_t)$ in $V$ for some $g_j \in \GL_{p^{a_j}}$.
Our goal is to show that $g_j$ is, in fact, central in $\GL_{p^{a_j}}$
for each $j = 1, \dots, r$. 

Let us fix $j$ and focus on proving that $g_j$ is central for this
particular $j$.
By assumption (b) of Theorem~\ref{thm.main2}, there exists
an $i = 1, \dots, t$ such that $y_i$ is balanced and $y_{ij} \neq 0$.
Let us assume that $y_{i j_1}, \dots, y_{i j_s} = \pm 1$ and
$y_{ih} = 0$ for every $h \not \in \{ j_1, \dots, j_r \}$
and consequently, $x_{i j_1}, \dots, x_{i j_s} = \pm 1$ and
$x_{ih} = 0$ for every $h \not \in \{ j_1, \dots, j_r \}$.
By our assumption, $j \in \{ j_1, \dots, j_s \}$.

The representation $\rho_{x_i}$ of
$\Gamma = \GL_{p^{a_1}} \times \dots \times \GL_{p^{a_r}}$ 
on 
\[ V_{x_i} :=  V^{x_{i1}} \otimes \dots \otimes V^{x_{it}} =
V^{x_{ij_1}} \otimes \dots \otimes V^{x_{ij_s}}  \]
factors through the projection 
$\Gamma \to \GL_{p^{a_{j_1}}} \times \dots \times \GL_{p^{a_{j_s}}}$.
Thus if $g = (g_1, \dots, g_r)$ stabilizes 
$v = (v_1, \dots, v_t) \in V$ then, in particular, $g$ stabilizes $v_i$ 
and thus $(g_{j_1}, \dots, g_{j_s})$ stabilizes $v_i$.

Since $y_i$ is assumed to be balanced, the conditions 
of Lemma~\ref{lem.ampopov} 
for the action of $\GL_{n_{j_1}} \times \dots \times \GL_{n_{j_s}}$ 
on $V_{x_i} = V^{x_{j_1}} \otimes \dots \otimes V^{x_{j_s}}$ are satisfied.
(Recall that here $n_i = p^{a_i}$.)  
Since $(g_{j_1}, \dots, g_{j_s})$ stabilizes $v_i \in V_{x_i}$ in
general position,
Lemma~\ref{lem.ampopov} tells us that $g_{j_1}, \dots, g_{j_s}$ are
central in $\GL_{n_{j_1}}, \dots \GL_{n_{j_s}}$, respectively.
In particular, $g_j$ is central in  $\GL_{n_j}$, as desired.
This completes the proof of Lemma~\ref{lem.tensors} and thus of
Theorem~\ref{thm.main2}.
\end{proof}

\section{Proof of Theorem~\ref{thm.main3}}
\label{sect.proof-of-main3}

Consider the central subgroups $\widetilde{C}$
and  $C$ of $\Gamma = \GL_{p^{a_1}} \times \dots \times \GL_{p^{a_r}}$
given by 
\[ \text{$\widetilde{C} = \{ (\tau_1, \dots, \tau_r) \in \bbG_m^r \; | \; 
\tau_1 \dots \tau_r = 1 \}$ and
$C = \{ (\tau_1, \dots, \tau_r) \in \mu \; | \; \tau_1 
\dots \tau_r = 1 \}$.} \]
Set $G : = \Gamma/C$ and $\widetilde{G} : = \Gamma/\widetilde{C}$.
Note that $C = \widetilde{C} \cap \mu$. Thus Theorem~\ref{thm.appendix}
and Corollary~\ref{cor1.appendix} 
tell us that the functors $H^1(-, G)$ and $H^1(-, \tilde{G})$ 
are both isomorphic to
\[ \ds \Func \colon K \mapsto \left\{
\begin{array}{l} 
\text{isomorphism classes of $r$-tuples $(A_1, \dots, A_r)$ of 
central simple $K$-algebras } \\
\text{such that $\deg(A_i)=p^{a_i}\; \forall i$, 
and $A_1 \otimes \ldots \otimes A_r$ is split over $K$.}\\
\end{array}
\right\} \]
In particular, $\ed(\widetilde{G}) = \ed(G) = \ed(\Func)$ and 
$\ed_p(\widetilde{G}) = \ed_p(G) = \ed_p(\Func)$. 
We are now ready to proceed with the proof of Theorem~\ref{thm.main3}.

\smallskip
(a) If $A_1 \otimes \dots \otimes A_r$ is split over $K$, then
$A_r$ can be recovered from $A_1, \dots, A_{r-1}$
as the unique central simple $K$-algebra of degree $p^{a_r}$
which is Brauer-equivalent to
\[ (A_1 \otimes \dots \otimes A_{r-1})^{\rm op} \, . \]
(Here $B^{\rm op}$ denotes the opposite algebra of $B$.) 
In other words, the morphism of functors  
\begin{equation} \label{e.functors}
\Func \to
H^1(-, \PGL_{p^{a_1}}) \times \dots \times  H^1(-, \PGL_{p^{a_{r-1}}}) 
\end{equation}
given by $(A_1, \dots, A_{r-1}, A_r) \to (A_1, \dots, A_{r-1})$
is injective. We claim that if $\ds a_r \geqslant a_1 + \dots + a_{r-1}$ (which
is our assumption in part (a)), then this morphism if also surjective.
Indeed, 
\[ \deg(A_1 \otimes \dots \otimes A_{r-1}) = p^{a_1 + \dots + a_{r-1}} \]
for any choice of central simple $K$-algebras $A_1, \dots, A_{r-1}$ 
such that $\deg(A_i) = p^{a_i}$. Hence, for any such choice there exists
a central simple algebra of degree $p^{a_r}$ which is Brauer-equivalent to 
$(A_1 \otimes \dots \otimes A_{r-1})^{\rm op}$. This proves the claim. 

We conclude that if $\ds a_r \geqslant a_1 + \dots + a_{r-1}$ 
then~\eqref{e.functors} is an isomorphism and thus
$\ed(\widetilde{G}) = \ed(G) = \ed(\Func) = \ed(\PGL_{p^{a_1}} \times
\dots \times \PGL_{p^{a_{r-1}}})$ and 
\[ \ed_p(\widetilde{G}) = \ed_p(G) = \ed_p(\Func) = \ed_p(\PGL_{p^{a_1}} \times
\dots \times \PGL_{p^{a_{r-1}}}) \, . \]
The inequality $\ed(\Func) 
\leqslant p^{2a_1} + \dots + p^{2a_{r-1}}$ now
follows from~\eqref{e.product}.

\smallskip
(b) Now suppose $\ds a_r < a_1 + \dots + a_{r-1}$.  In this case 
$\Code(C)$ has a minimal basis consisting of the single element
$(1, \dots, 1) \in (\bbZ/p^{a_1} \bbZ) \times \dots \times 
(\bbZ/p^{a_r} \bbZ)$. Moreover, 
$p^{a_r} \leqslant \dfrac{1}{2} p^{a_1} \dots p^{a_{r-1}}$ 
and consequently, Theorem~\ref{thm.main2} applies.
It tells us that if the $r$-tuple
$(p^{a_1}, \dots, p^{a_r})$ is not of the form 
$(2,2,2,2)$, $(3,3,3)$ or $(2, 2^a, 2^a)$, then
\[ \ds \ed(\Func) = \ed_p(\Func) = 
\ed(\widetilde{G}) = \ed_p(\widetilde{G}) = 
\ed(G) = \ed_p(G) = 
p^{a_1 + \dots + a_r} - \sum_{i=1}^rp^{2a_i} +r-1 \, , \]
as claimed.

\smallskip
(c) In the case, where $(p^{a_1}, \dots, p^{a_r}) = (2, 2, 2)$, 
$\Func(K)$ is the set of isomorphism classes of
triples $(A_1, A_2, A_3)$ of quaternion
$K$-algebras, such that $A_1 \otimes A_2 \otimes A_3$ is split over $K$.
We will show that (i) $\ed(\Func) \leqslant 3$ and 
(ii) $\ed_2(\Func) \geqslant 3$.

\smallskip
To prove (i), recall that
by a theorem of Albert \cite[Theorem III.4.8]{L05}, 
the condition that $A_1 \otimes A_2 \otimes A_3$ is split 
over $K$ implies that $A_1$ and $A_2$ are linked over $K$. 
That is, there exist $a, b, c \in K^*$ such that  
$A_1 \simeq (a,b)$ and $A_2 \simeq (a,c)$ over $K$. 
Hence, the triple $(A_1, A_2, A_3) 
\in \Func(K)$ descends to the triple $(B_1, B_2, B_3) \in \Func(K_0)$, where
$K_0 = k(a, b, c)$, $B_1 = (a,b)$, $B_2 = (a,c)$ 
and $B_3 = (a, bc)$ over $K_0$. 
Since $\trdeg(K_0/k) \leqslant 3$, assertion (i) follows.

To prove (ii), consider the morphism of functors
$f \colon  \Func          \to     H^1(-, \SO_4)$ given by
\[          f \colon (A_1, A_2, A_3) \mapsto  \alpha \, , \]
where $\alpha$ is a $4$-dimensional quadratic form 
such that 
\[ \alpha \oplus \HH \oplus \HH \cong N(A_1) \oplus (-N(A_2)) \, . \]
Here $\HH$ denotes the $2$-dimensional hyperbolic form
$\langle 1,-1 \rangle$, $N(A_1)$ denotes the norm 
form of $A_1$, and $-N(A_2)$ denotes the opposite norm form of $A_2$, i.e., 
the unique $4$-dimensional form such that 
$N(A_2) \oplus (-N(A_2))$ is hyperbolic. 
Since $N(A_1)$ and $N(A_2)$ are forms of discriminant $1$,
so is $\alpha$ (this will also be apparent from the explicit computations 
below). Thus we may view $\alpha$ as an element of the Galois cohomology set 
$H^1(K, \SO_4)$, which classifies 
$4$-dimensional quadratic forms of discriminant $1$ over $K$, 
up to isomorphism.  Note also that by the Witt Cancellation 
Theorem, $\alpha$ is unique up to isomorphism.  
We conclude that the morphism of functors $f$ is well defined. 

Equivalently, using the definition of the Albert form given 
in \cite[p.~69]{L05}, $\alpha$ is the unique 
$4$-dimensional quadratic form such that 
$\alpha \oplus \HH \cong q$, where $q$ is the $6$-dimensional
Albert form of $A_1$ and $A_2$. Here the Albert form of
$A_1$ and $A_2$ is isotropic, 
and hence, can be written as $\alpha \oplus \HH$, because $A_1$ and $A_2$ 
are linked; once again, see~\cite[Theorem III.4.8]{L05}.

Suppose $A_1=(a,b)$, $A_2=(a,c)$, and $A_3 = (a, bc)$,  
as above.  Then 
\[ N(A_1) = \langle\langle-a,-b\rangle\rangle = \langle1,-a,-b,ab\rangle \, , \]
and similarly $N(A_2) = \langle1,-a,-c,ac\rangle$;
see, e.g.,~\cite[Corollary III.2.2]{L05}.  Thus
$$N(A_1) \oplus (-N(A_2)) = \langle1,-1,-a,a,-b,c,ab,-ac\rangle \simeq
\langle-b,c,ab,-ac\rangle \oplus \HH \oplus \HH$$ 
and we obtain an explicit formula for $\alpha = f(A_1, A_2, A_3)$: 
$\alpha \cong \langle-b,c,ab,-ac\rangle$.

It is easy to see that any $4$-dimensional
quadratic form of discriminant $1$ over $K$ can be written as
$\langle-b,c,ab,-ac\rangle$ for some $a, b, c \in K^*$.
In other words, the morphism of functors
$f \colon  \Func          \to     H^1(-, \SO_4)$ is surjective.
% It now follows from the definition of essential dimension that
Consequently,
\[ \ed_2(\Func) \geq \ed_2(H^1(- , \SO_4)) = \ed_2(\SO_4) \, ; \]  
see, e.g.,~\cite[Lemma 1.9]{BF03} or~\cite[Lemma 2.2]{icm}.
On the other hand, $\ed_2(\SO_4;2) =3$; 
see~\cite[Theorem 8.1(2) \& Remark 8.2]{RY00} or
\cite[Corollary 3.6(a)]{icm}. Thus
\[ \ed_2(\Func) \geqslant \ed_2(\SO_4) =  3 \, . \]  
This completes the proof of (ii) and thus of part (c) and of
Theorem~\ref{thm.main3}.
\qed

\bigskip
\noindent
{\sc Acknowledgements.}
The authors are grateful to the anonymous referee for 
a careful reading of our paper and numerous helpful comments.

\bigskip

\section*{Appendix: Galois Cohomology of central quotients of products of
general linear groups} 
\renewcommand{\thesection}{A}
\label{sect.appendix}

% \begin{center} by \end{center}

\begin{center}
by {\large Athena Nguyen} 
\footnote{This appendix is based on a portion of the author's 
Master's thesis completed at the University of British Columbia. 
The author gratefully acknowledges the financial 
support from the University of British Columbia and
the Natural Sciences and Engineering Research Council 
of Canada.}
% \\
% University of British Columbia, Vancouver, BC, V6R2L9, Canada,
% athena@math.ubc.ca
\end{center}

\bigskip

In this appendix we will study the Galois cohomology of algebraic groups 
of the form 
\[
G := \Gamma/C,  
\]
where 
$\Gamma := \GL_{n_1} \times \dots \times \GL_{n_r}$ and
$C \subset Z(\Gamma) = \bbG_m^r$ is a central subgroup.
Here $n_1, \dots, n_r \geqslant 1$ are integers, not necessarily prime powers.
Let $\overline{G} := G/Z(G) = \PGL_{n_1} \times \dots \times \PGL_{n_r}
= \Gamma/Z(\Gamma)$.  
Recall that for any field $K/k$,
$H^1(K, \PGL_n)$ is naturally identified with the set 
of isomorphism classes of central simple $K$-algebras of degree $n$, and
\[ H^1(K, \overline{G}) = H^1(K, \PGL_{n_1}) \times \dots \times
H^1(K, \PGL_{n_r}) \]
with the set of $r$-tuples $(A_1, \dots, A_r)$ 
of central simple $K$-algebras such that $\deg(A_i) = n_i$. 
Denote by $\partial^i_K$ 
the coboundary map $H^1(K, \PGL_{n_i})\rightarrow H^2(K, \bbG_m)$ 
induced by the short exact sequence
\begin{align*}
1\rightarrow \bbG_m \rightarrow \GL_{n_i} \rightarrow \PGL_{n_i} \rightarrow 1.
\end{align*}
This map sends a central simple algebra $A_i$ to its Brauer class $[A_i]$
in $H^2(K, \bbG_m) = \Br(K)$.

Of particular interest to us will be 
\[ X(\bbG_m^r/C) = \{ (m_1, \dots, m_r) \in \bbZ^r \;  
| \; \tau_1^{m_1} \dots \tau_r^{m_r} = 1  \; \; 
\forall (\tau_1, \dots, \tau_r) \in \bbG_m^r \} \, , \]
as in~\eqref{e.integer-code}. 
We are now ready to state the main result of this appendix.

\begin{thm}\label{thm.appendix}
Let $\pi \colon G \to \overline{G} := \PGL_{n_1} \times \dots \times \PGL_{n_r}$
be the natural projection and  
$\pi_* \colon H^1(K,G) \rightarrow H^1(K,\overline{G})$ 
be the induced map in cohomology. Here $K/k$ is a field extension.
Then

\smallskip
(a) $\pi_* \colon H^1(K,G) \rightarrow H^1(K,\overline{G})$ 
is injective for every field $K/k$. 

\smallskip
(b) $\pi_*$  identifies $H^1(K,G)$ with the set of isomorphism classes  
of $r$-tuples $(A_1,\ldots, A_r)$ of central simple $K$-algebras 
such that $\deg(A_i) = n_i$ and
% \begin{align*}
$A_1^{\otimes m_1} \otimes \cdots \otimes A_r^{\otimes m_r}$
is split over $K$ for every $(m_1,\ldots,m_r) \in X(\bbG_m^r/C)$.
\end{thm}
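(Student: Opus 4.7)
The plan is to derive both parts (a) and (b) from the central extension
\[
1 \to T \to G \to \overline{G} \to 1,
\]
with $T := \bbG_m^r/C$. Dualizing the short exact sequence $1 \to C \to \bbG_m^r \to T \to 1$ of diagonalizable groups gives $0 \to X(T) \to \bbZ^r \to X(C) \to 0$, so $X(T) = X(\bbG_m^r/C)$ is a subgroup of $\bbZ^r$ and hence free abelian. In particular $T$ is a split torus and $H^1(K,T) = 0$ for every field extension $K/k$ by Hilbert 90. The long exact cohomology sequence attached to the central extension above then reads
\[
0 \longrightarrow H^1(K, G) \xrightarrow{\pi_*} H^1(K, \overline{G}) \xrightarrow{\delta} H^2(K, T),
\]
which immediately proves (a) and reduces (b) to identifying $\ker(\delta)$ with the set of $r$-tuples described in the theorem.

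Next I would compare $\delta$ with the classical connecting map $\partial \colon H^1(K, \overline{G}) \to H^2(K, \bbG_m^r)$ coming from $1 \to \bbG_m^r \to \Gamma \to \overline{G} \to 1$. The morphism of central extensions with identity on $\overline{G}$ and the quotient $q\colon \bbG_m^r \to T$ on the kernels, combined with the functoriality of connecting homomorphisms, shows that $\delta = q_* \circ \partial$, where $q_* \colon H^2(K,\bbG_m^r) \to H^2(K,T)$ is induced by $q$. Under the standard identification $H^2(K,\bbG_m^r) \cong \Br(K)^r$, one has $\partial(A_1,\dots,A_r) = ([A_1],\dots,[A_r])$. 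So part (b) reduces to showing that $q_*([A_1],\dots,[A_r]) = 0$ in $H^2(K,T)$ if and only if $A_1^{\otimes m_1} \otimes \cdots \otimes A_r^{\otimes m_r}$ is split over $K$ for every $(m_1,\dots,m_r)\in X(\bbG_m^r/C)$.

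To detect vanishing in $H^2(K,T)$, I would fix a $\bbZ$-basis $x_1,\dots,x_s$ of $X(T)$; this induces $T \cong \bbG_m^s$ and $H^2(K,T)\cong \Br(K)^s$, so a class in $H^2(K,T)$ vanishes if and only if its pushforward $x_*$ to $\Br(K) = H^2(K,\bbG_m)$ is zero for every $x \in X(T)$. For $x = (m_1,\dots,m_r)\in X(T)\subset \bbZ^r$, the composite $\bbG_m^r \xrightarrow{q} T \xrightarrow{x} \bbG_m$ is the character $(\tau_1,\dots,\tau_r)\mapsto \tau_1^{m_1}\cdots\tau_r^{m_r}$, so $x_*\, q_*([A_1],\dots,[A_r]) = m_1[A_1] + \cdots + m_r[A_r] = [A_1^{\otimes m_1}\otimes\cdots\otimes A_r^{\otimes m_r}]$. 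Combining these observations gives (b).

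The argument is essentially bookkeeping once the central extension is in hand; the only care required is in tracing the connecting maps through the comparison diagram and in matching the pushforwards of Brauer classes under characters. No substantive obstacle is expected.
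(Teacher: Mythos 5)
Your proposal follows essentially the same route as the paper: reduce to the central extension $1 \to T \to G \to \overline{G} \to 1$ with $T = \bbG_m^r/C$ a split torus, use Hilbert 90 to kill $H^1(K,T)$, and then characterize the kernel of the connecting map by comparing with the usual coboundary $\partial\colon H^1(K,\overline{G}) \to H^2(K,\bbG_m)^r$ and pushing forward along characters. This is exactly what the paper does, with $\eta_*$ playing the role of your $q_*$ and the diagonalizability of $T$ replacing your explicit choice of $\bbZ$-basis.

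One step deserves more care than the phrase ``which immediately proves (a)'' suggests. The cohomology sequence for a central extension is an exact sequence of \emph{pointed sets}, so vanishing of $H^1(K,T)$ directly gives only that the fiber of $\pi_*$ over the distinguished point of $H^1(K,\overline{G})$ is trivial, not that $\pi_*$ is injective. To upgrade this to injectivity one must twist: for an arbitrary $\beta \in H^1(K,G)$, the fiber $\pi_*^{-1}(\pi_*(\beta))$ is controlled by $H^1(K, {}_{\beta}T)$; because $T$ is central the twisted form ${}_{\beta}T$ equals $T$, and Hilbert 90 again applies. This is precisely the content of the result the paper cites, \cite[I.5, Proposition 42]{S97}. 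Your argument is correct once this twisting step is made explicit (or that reference is invoked), but as written it reads as if abelian exactness were available, which it is not.
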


\begin{proof}
Throughout, we will identify $H^2(K,\mb{G}_m^r)$ with $H^2(K,\mb{G}_m)^r$ 
and $X(\mb{G}_m^r)$ with $\bbZ^n$. A character 
$x =(m_1,\ldots, m_r) \in \bbZ^n$, i.e., a character 
$x \colon \bbG_m^r \to \bbG_m$
given by $(\tau_1, \dots, \tau_r) \to \tau_1^{m_1} \dots \tau_r^{m_r}$,
induces a map 
$x_\ast \colon H^2(K,\mb{G}_m)^r \rightarrow H^2(K,\mb{G}_m)$
in cohomology given by
\begin{equation} \label{e.x_*}
x_*(\alpha_1,\ldots, \alpha_r) = \alpha_1^{m_1}\cdot 
\ldots \cdot \alpha_r^{m_r} \, .
\end{equation}

 Let us now consider the diagram
\[
\xymatrix{
&1 \ar[r] & \bbG_m^r \ar[r] \ar[d]_\eta & \Gamma \ar[r]\ar[d] 
&\prod\limits_{i=1}^r \PGL_{n_i} \ar[r]\ar@{=}[d] &1 \\
& 1 \ar[r] & \bbG_m^r/C \ar[r] & G \ar[r]^(.3){\pi} & 
\prod\limits_{i=1}^r \PGL_{n_i} \ar[r] &1. }
\]
Since $H^1(K, \bbG_m^r/C) = \{ 1 \}$ 
by Hilbert's Theorem 90, we obtain the following 
diagram in cohomology with exact rows:
\[
\xymatrix{
  &  & H^1(K, \prod\limits_{i=1}^r \PGL_{n_i}) 
\ar[rr]^(.55){(\partial^1_K,\ldots, \partial_K^r)} \ar@{=}[d] & &
H^2(K, \bbG_m^r) \ar[d]^{\eta_\ast} \\
  0 \ar[r] & H^1(K,G) \ar[r]^(.4){\pi_\ast} & 
H^1(K, \prod\limits_{i=1}^r \PGL_{n_i}) \ar[rr]^{\partial_K} & 
& H^2(K, \bbG_m^r/C)}
\]

(a) It follows from~\cite[I.5, Proposition 42]{S97} that $\pi_\ast$ 
is injective. 

(b) Thus, $\pi_*$ identifies $H^1(K,G)$ with the set of 
$r$-tuples $(A_1,\ldots, A_r)$, where $A_i \in H^1(K, \PGL_{n_i})$ 
is a central simple algebra of degree $n_i$,
and $(\partial_K^1 (A_1),\ldots, \partial_K^r(A_r)) \in 
\Ker (\eta_\ast)$. Recall that $\partial_K^i$ sends 
a central simple algebra $A_i$ to its Brauer class 
$[A_i] \in H^2(K, \bbG_m)$. 

Consider an $r$-tuple $\alpha := ([A_1],\ldots, [A_r]) \in H^2(K, \bbG_m^r)$.
Since $\bbG_m^r/C$ is diagonalizable, $\eta_\ast(\alpha)=0$ 
if and only if $x_\ast(\eta_\ast(\alpha))=0$ for all 
$x \in X(\bbG_m^r/C)$. 
If $x = (m_1, \dots, m_r) \in X(\bbG_m^r/C)$, then 
$x_* \circ \eta_* = (m_1, ... m_r) \in X(G_m^r)$.
By~\eqref{e.x_*},
$x_\ast(\eta_\ast(\alpha))= [A_1^{\otimes m_1}
\otimes \cdots \otimes A_r^{\otimes m_r}]$, and
part (b) follows. 
\end{proof}

\begin{cor} \label{cor1.appendix}
Let $\Gamma := \GL_{n_1} \times \dots \times \GL_{n_r}$,
$C_1, C_2$ be $k$-subgroups of $Z(\Gamma) = \bbG_m^r$,
$G_1 = \Gamma/C_1$ and $G_2 = \Gamma/C_2$. Denote the central subgroup 
$\mu_{n_1} \times \dots \times \mu_{n_r}$ of $\Gamma$ by $\mu$.

If $C_1 \cap \mu = C_2 \cap \mu$ then the
Galois cohomology functors $H^1(-, G_1)$ and 
$H^1(-, G_2)$ are isomorphic.
\end{cor}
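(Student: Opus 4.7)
The plan is to reduce the functor $H^1(-,G_i)$, as described by Theorem~\ref{thm.appendix}(b), to a form that depends only on the intersection $C_i \cap \mu$. Set $C := C_i$ (for either $i$) and write $\pi \colon X(\bbG_m^r) = \bbZ^r \to X(\mu) = (\bbZ/n_1\bbZ) \times \dots \times (\bbZ/n_r\bbZ)$ for the restriction map. By Theorem~\ref{thm.appendix}, an element of $H^1(K, G_i)$ is an $r$-tuple $(A_1,\dots,A_r)$ of central simple $K$-algebras of degrees $n_1, \dots, n_r$ such that $A_1^{\otimes m_1} \otimes \dots \otimes A_r^{\otimes m_r}$ is split for every $(m_1,\dots,m_r) \in X(\bbG_m^r/C)$. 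The first observation is that since $\deg(A_i) = n_i$, its Brauer class is killed by $n_i$, so this tensor product depends only on $(m_1 \bmod n_1, \dots, m_r \bmod n_r) = \pi(m_1, \dots, m_r)$. Hence the splitting conditions for $G$ are indexed by the subgroup $\pi(X(\bbG_m^r/C)) \subseteq X(\mu)$.

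The key step is to identify this subgroup as $X(\mu/(\mu \cap C))$. For this I would look at the short exact sequence of diagonalizable groups
\[
1 \to \mu/(\mu \cap C) \to \bbG_m^r/C \to \bbG_m^r/(\mu \cdot C) \to 1,
\]
where the first map is induced by $\mu \hookrightarrow \bbG_m^r$ and the cokernel is computed using $(\mu \cdot C)/C \simeq \mu/(\mu \cap C)$. Since the character-group functor is exact on diagonalizable $k$-groups, applying $X(-)$ yields a short exact sequence of abelian groups
\[
0 \to X(\bbG_m^r/(\mu \cdot C)) \to X(\bbG_m^r/C) \to X(\mu/(\mu \cap C)) \to 0,
\]
and the surjection on the right is precisely $\pi$ restricted to $X(\bbG_m^r/C)$. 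Consequently $\pi(X(\bbG_m^r/C)) = X(\mu/(\mu \cap C))$, so the splitting conditions defining $H^1(K,G)$ depend only on the subgroup $\mu \cap C$ of $\mu$.

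Putting the pieces together: if $C_1 \cap \mu = C_2 \cap \mu$, then the two systems of splitting conditions coincide, so Theorem~\ref{thm.appendix}(b) identifies $H^1(K, G_1)$ and $H^1(K, G_2)$ with the same subset of $\prod_i H^1(K,\PGL_{n_i})$ for every $K/k$. The identification is natural in $K$ because Theorem~\ref{thm.appendix} is proved via the coboundary maps of the short exact sequences $1 \to \bbG_m^r/C_i \to G_i \to \overline G \to 1$, which are functorial in $K$. The only step requiring care is verifying the short exact sequence of diagonalizable groups displayed above and the resulting exactness after applying $X(-)$; this is routine, and I do not anticipate any serious obstacle.
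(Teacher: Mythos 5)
Your proof is correct and follows essentially the same route as the paper's: both reduce to Theorem~\ref{thm.appendix}, observe that the splitting conditions only depend on $\pi(X(\bbG_m^r/C_i))\subset X(\mu)$, and then note that this image is determined by $C_i\cap\mu$. The paper simply asserts that $C_1\cap\mu=C_2\cap\mu$ is equivalent to $\pi(X(\bbG_m^r/C_1))=\pi(X(\bbG_m^r/C_2))$, whereas you supply the justifying exact sequence $1\to\mu/(\mu\cap C)\to\bbG_m^r/C\to\bbG_m^r/(\mu\cdot C)\to 1$ and its character-group sequence; this is the same argument with one step filled in more explicitly.
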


\begin{proof} By Theorem~\ref{thm.appendix}, $H^1(K, G_i)$
is naturally identified with the set of $r$-tuples $(A_1, \dots, A_r)$
of central simple algebras such that $\deg(A_i) = n_i$
and 
\[ \text{$A_1^{\otimes m_1} \otimes \dots \otimes A_r^{\otimes m_r}$ 
 is split over $K$ for 
every $(m_1, \dots, m_r) \in X(\bbG_m/C_i)$.} \]
Note that since $A_i^{\otimes n_i}$ is split for every $i$,
this condition depends only on the image of $(m_1, \dots, m_r)$
under the natural projection 
\[ \pi \colon X(\bbG_m^r) = \bbZ^r \to (\bbZ/ n_1 \bbZ) \times \dots \times
(\bbZ/ n_r \bbZ) = X(\mu) \, .\]

Our assumption that $C_1 \cap \mu = C_2 \cap \mu$ is equivalent to
$X(\bbG_m^r/C_1)$ and   $X(\bbG_m^r/C_2)$ having 
the same image under $\pi$, and the corollary follows.
\end{proof}

In order to state the second corollary of Theorem~\ref{thm.appendix}, 
we will need the following definition. By a code we shall mean a subgroup
of $X(\mu) = (\bbZ/ n_1 \bbZ) \times \dots \times
(\bbZ/ n_r \bbZ)$. Given a subgroup $C \subset \mu$, we define
the code $\Code(C) := X(\mu/C)$, as in~\eqref{e.C}. 

We will say that two codes are called {\em equivalent} if one 
can be obtained from the other by repeatedly performing 
the following elementary operations:

\smallskip
(1) Permuting entries $i$ and $j$ in every vector of the code, 
for any $i,j$ with $n_i = n_j$.

\smallskip
(2) Multiplying the $i^{th}$ entry in every vector of the 
code by an integer $c$ prime to $n_i$.

\begin{cor}\label{cor2.appendix}
Suppose $C_1$ and $C_2$ are subgroups of 
$\mu := \mu_{n_1} \times \dots \times \mu_{n_r}$, 
$G_1 = \Gamma/C_1$ and $G_2:= \Gamma/C_2$.
If $\Code(C_1)$ and $\Code(C_2)$ are equivalent, then

\smallskip
(a) the Galois cohomology functors $H^1(-,G_1)$ and $H^1(-, G_2)$
are isomorphic, and 

\smallskip
(b) in particular, $\ed(G_1) = \ed(G_2)$ and $\ed_p(G_1) = \ed_p(G_2)$
for every prime $p$.
\end{cor}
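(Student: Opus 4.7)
The plan is to apply the description of $H^1(K, G_i)$ given by Theorem~\ref{thm.appendix} (refined by Corollary~\ref{cor1.appendix}, which guarantees that the functor depends only on the image $\Code(C_i)$ in $X(\mu)$) and to construct an explicit natural isomorphism of functors for each of the two elementary operations that generate code equivalence. Since a general equivalence is a composition of such operations, it suffices to handle them one at a time. Part (b) will then be immediate from part (a), as both $\ed$ and $\ed_p$ are invariants of the associated functor.

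For operation (1), where $\Code(C_2)$ is obtained from $\Code(C_1)$ by swapping entries $i$ and $j$ with $n_i=n_j$, the natural map sends $(A_1,\dots,A_r) \in H^1(K,G_1)$ to the $r$-tuple obtained by exchanging $A_i$ and $A_j$. The splitting condition for a codeword $w \in \Code(C_2)$ applied to the new tuple translates to the splitting condition for the corresponding swapped codeword in $\Code(C_1)$ applied to the original tuple, so the map is well-defined; naturality in $K$ and bijectivity (the inverse is the same swap) are clear.

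For operation (2), where $\Code(C_2)$ is obtained from $\Code(C_1)$ by multiplying the $i$-th entry by an integer $c$ coprime to $n_i$, I would send $(A_1,\dots,A_r)$ to $(A_1,\dots,A_i',\dots,A_r)$, where $A_i'$ is a central simple $K$-algebra of degree $n_i$ whose Brauer class equals $c'\cdot [A_i]$ for any integer $c'$ with $cc' \equiv 1 \pmod{n_i}$. The main subtlety, and in my view the only nontrivial point in the argument, is that $A_i'$ must be well defined up to $K$-algebra isomorphism; this follows from the fact that a Brauer class of index dividing $n_i$ is represented by a unique central simple algebra of degree exactly $n_i$, namely the appropriate matrix algebra over the underlying division algebra. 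Once this is granted, the condition on the new tuple for codewords in $\Code(C_2)$ reduces via a direct Brauer-class computation to the condition on the original tuple for codewords in $\Code(C_1)$: for $(v_1,\dots,cv_i,\dots,v_r) \in \Code(C_2)$ coming from $(v_1,\dots,v_r)\in \Code(C_1)$, the Brauer class of $A_i'^{\otimes cv_i} \otimes \bigotimes_{j\ne i} A_j^{\otimes v_j}$ equals $cv_i c'[A_i] + \sum_{j\ne i} v_j[A_j] = \sum_j v_j[A_j]$ in $\Br(K)$, which is trivial by hypothesis. Naturality in $K$ is immediate because base change commutes with tensor products and with the selection of the algebra of a prescribed degree within a Brauer class, and bijectivity follows by applying the same construction with $c$ replaced by $c'$.
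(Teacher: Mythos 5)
Your proof is correct and follows essentially the same route as the paper: reduce to the two elementary operations and write down the explicit isomorphism of functors via the description of $H^1(K,G_i)$ from Theorem~\ref{thm.appendix} and Corollary~\ref{cor1.appendix}. For operation (2) the paper's map sends $A_i$ to the degree-$n_i$ algebra Brauer-equivalent to $A_i^{\otimes c}$, while you use the inverse $c'$; this is the same map with the roles of $C_1,C_2$ reversed, and your identification of well-definedness of the degree-$n_i$ representative as the only real point to check is exactly the paper's $[A_i^{\otimes c}]_{n_i}$ notation made precise. The only stylistic difference is in operation (1): the paper observes that the swap is induced by the automorphism $\alpha$ of $\Gamma$ permuting the $i$th and $j$th factors, so that $C_2=\alpha(C_1)$ and $G_1\cong G_2$ as algebraic groups, whence the isomorphism of functors is automatic and no well-definedness check is needed; your version instead verifies directly that the swap on $r$-tuples respects the splitting conditions, which is a little more work but equally valid.
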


\begin{proof} 
(a) It suffices to show that $H^1(-, G_1)$ and $H^1(-,G_2)$
are isomorphic if $C_2$ is obtained from $C_1$ by an
elementary operation.

(1) Suppose $n_i = n_j$ for some $i, j = 1, \dots, r$, and
$\Code(C_2)$ is obtained from $\Code(C_1)$ by permuting 
entries $i$ and $j$ in every vector.
In this case $C_2 = \alpha(C_1)$, where $\alpha$ is the automorphism
of $\Gamma = \GL_{n_1} \times \dots \times \GL_{n_r}$ which swaps
the $i$th and the $j$th components. Then $\alpha$ induces an isomorphism
between $G_1 = \Gamma/C_1$ and $G_2 = \Gamma/C_2$, and thus 
an isomorphism between $H^1(-, G_1)$ and $H^1( -, G_2)$. 

(2) Now suppose that $\Code(C_1)$ is 
obtained from $\Code(C_2)$ by multiplying 
the $i^{th}$ entry in every vector by some $c \in (\bbZ/ n_i \bbZ)^*$. 
The description of $H^1(K, G/\mu)$ given 
by Theorem~\ref{thm.appendix} now tells us that
\begin{align*}
H^1(K,G_1) &\rightarrow H^1(K,G_2)\\
(A_1,\ldots, A_r)&\mapsto (A_1,\ldots, A_{i-1}, [A_i^{\otimes c}]_{n_i}, 
A_{i+1}, \ldots, A_r)
\end{align*}
is an isomorphism. Here, by $[A_i^{\otimes c}]_{n_i}$ we mean the unique
central simple $K$-algebra of degree $n_i$ which 
is Brauer equivalent to $A_i^{\otimes c}$. 

\smallskip
(b) follows from (a), because $\ed(G)$ and $\ed_p(G)$ are defined 
entirely in terms of the Galois cohomology functor $H^1(-, G)$.
\end{proof}

\end{document}